\newcommand{\mini}{\mathop{\rm minimize}}
\newcommand{\maxi}{\mathop{\rm maximize}}
\newcommand{\subj}{\mathop{\rm subject~to}}
\newcommand{\A}{{\cal A}}
\newcommand{\R}{\mathbb{R}}
\newcommand{\N}{{\cal N}}
\newcommand{\T}{{\cal T}}
\newcommand{\M}{{\cal M}}
\newcommand{\U}{{\cal U}}
\newcommand{\V}{{\cal V}}
\renewcommand{\S}{\mathbb{S}}
\renewcommand{\P}{{\cal P}}
\begin{document}

\title{
Superlinear and quadratic convergence of a stabilized sequential quadratic semidefinite programming method for nonlinear semidefinite programming problems
\thanks{
The author was in part supported by the Japan Society for the Promotion of Science KAKENHI for 21K17709.
}
}

\titlerunning{Local convergence of a stabilized SQSDP method for NSDP}         

\author{Yuya Yamakawa}


\institute{
Yuya Yamakawa \at Graduate School of Informatics, Kyoto University, Yoshida-Honmachi, Sakyo-ku, Kyoto 606-8501, Japan,
\\
\email{yuya@i.kyoto-u.ac.jp}
}

\date{Received: date / Accepted: date}

\maketitle

\begin{abstract}
In this paper, we present a stabilized sequential quadratic semidefinite programming (SQSDP) method for nonlinear semidefinite programming (NSDP) problems and prove its local convergence. The stabilized SQSDP method is originally developed to solve degenerate NSDP problems and is based on the stabilized sequential programming (SQP) methods for nonlinear programming (NLP) problems. Although some SQP-type methods for NSDP problems have been proposed, most of them are SQSDP methods which are based on the SQP methods for NLP problems, and there are few researches regarding the stabilized SQSDP methods. In particular, there is room for the development of locally fast convergent stabilized SQSDP methods. We prove not only superlinear but also quadratic convergence of the proposed method under some mild assumptions, such as strict Robinson's constraint qualification and second-order sufficient condition.

\keywords{nonlinear semidefinite programming \and stabilized sequential quadratic semidefinite programming method \and superlinear convergence \and quadratic convergence}
\end{abstract}

\section{Introduction} \label{section:intro}
We consider solving the following nonlinear semidefinite programming (NSDP) problem:
\begin{align}
\begin{aligned} \label{NSDP}
& \mini_{x \in \R^{n}} & & f(x)
\\ 
& \subj & & g(x) = 0, ~ X(x) \succeq O,
\end{aligned}
\end{align}
where the functions $f \colon \R^{n} \rightarrow \R$, $g \colon \R^{n} \rightarrow \R^{m}$, and $X \colon \R^{n} \rightarrow \S^{d}$ are twice continuously differentiable, and $\R^{n}$ denotes the $n$-dimensional real space, and $\S^{d}$ represents the set of $d \times d$ real symmetric matrices. Moreover, let $\S_{++}^{d} ~ (\S_{+}^{d})$ be the set of $d \times d$ real symmetric positive (semi)definite matrices. For a matrix $M \in \S^{d}$, $M \succ O$ and $M \succeq O$ respectively indicate $M \in \S_{++}^{d}$ and $M \in \S_{+}^{d}$. 
\par
NSDP problems are known as a wide class optimization problems and have many applications such as robust control theory, structural optimization, finance, statistics, and so forth. Since it is desired to solve NSDP problems in various fields, a lot of researchers have proposed optimization methods to solve them. Nowadays, there exist many kinds of optimization methods, such as sequential quadratic semidefinite programming (SQSDP) methods~\cite{CoRa04,FaNoAp02,GoRa10,LiZh19,ZhCh16,ZhCh18,ZhZh14}, stabilized SQSDP methods~\cite{OkYaFu22,YaOk22}, primal-dual interior point methods~\cite{ArOkTa23,KaYaYa15,okuno2020local,YaYa14,YaYa15,HYaHYa12,YaYaHa12}, and augmented Lagrangian methods~\cite{AnHaVi20,HuTeYa06,KoSt03,SuSuZh08,WuLuDiCh13}. Most of them are mainly based on optimization methods for nonlinear programming (NLP) problems.
\par
Now we focus on the stabilized SQSDP methods for problem~\eqref{NSDP}. To introduce the stabilized SQSDP methods, we shall start by providing a brief review of the existing research regarding the SQSDP methods. They have been proposed by the existing literatures stated above and solve a sequence of quadratic SDP (QSDP) subproblems to generate a search direction and new Lagrange multipliers. Hence, we can regard the SQSDP methods as an extension of sequential quadratic programming (SQP) methods for NLP problems. On the other hand, the stabilized SQSDP methods are based on the stabilized SQP methods~\cite{GiKuRo17,gill2017stabilizedsup,GiRo13} for NLP problems and solve a sequence of stabilized QSDP subproblems instead of the QSDP subproblems. The stabilized subproblems have the following several advantages over the original ones: They always have at least one feasible solution and satisfy the Slater constraint qualification (CQ), and have a stabilizing effect on calculation related to Lagrange multipliers. In particular, from the second property of the stabilized subproblems, it is known that the stabilized SQSDP methods are effective for degenerate problems which do not satisfy CQs. Indeed, Yamakawa et al.~\cite{YaOk22} and Okabe~et al.~\cite{OkYaFu22} proposed the stabilized SQSDP methods for degenerate NSDP problems and proved its global convergence. However, there are few types of research regarding the stabilized SQSDP methods. Moreover, as far as we know, no previous research has proposed stabilized SQSDP methods with fast local convergence.
\par
The purpose of this paper is to propose a locally convergent stabilized SQSDP method and to prove its superlinear and quadratic convergence under some mild assumptions, such as the strict Robinson's constraint qualification (SRCQ) and second-order sufficient condition (SOSC). Regarding the existing SQSDP methods, several researchers have analyzed their local convergence under some rather strong assumptions such as the strict complementarity condition, constraint nondegeneracy, and strong SOSC, e.g., \cite{FaNoAp02,LiZh19,ZhCh16,ZhCh18}. More precisely, in the local convergence analysis of the existing literatures, the KKT conditions of the subproblem are basically regarded as the Newton equation, and local convergence is shown by utilizing the framework of the Newton method. Then one might consider that we can show local convergence of the stabilized SQSDP method by the same technique as the existing research. However, it is difficult to utilize such an existing technique of local convergence because of the different subproblem structures of the two methods. Hence, we analyze the convergence rate of the proposed method in another way.
\par
The organization of the paper is as follows. Section~\ref{section:pre} introduces some notation and terminologies required in the subsequent sections. In Section~\ref{section:stabilized_SQSDP}, we propose a locally convergent stabilized SQSDP method. In Section~\ref{section:local_convergence}, we prove fast local convergence of the proposed stabilized SQSDP method. 
Finally, we provide some conclusions and remarks in Section~\ref{section:conclusion}.
\par
We provide some mathematical notation. Let $\mathbb{N}$ be the set of natural numbers (positive integers). For $p \in \mathbb{N}$ and $q \in \mathbb{N}$, the set of matrices with $p$-rows and $q$-columns is denoted by $\R^{p \times q}$. Let $w \in \R^{p}$ and $W \in \R^{p \times q}$. The notation $[w]_{i}$ indicates the $i$-th element of $w$, and the notation $[W]_{ij}$ means the $(i,j)$-th entry of $W$. The transposition of $W$ is represented as $W^{\top} \in \R^{q \times p}$. We write $W^{\dagger} \in \R^{q \times p}$ to indicate the Moore-Penrose generalized inverse of $W$. If $W$ is a square matrix, then we denote by ${\rm tr}(M)$ the trace of $W$. For $V \in \R^{p \times q}$ and $W \in \R^{p \times q}$, the inner product of $V$ and $W$ is defined by $\left\langle V, W \right\rangle \coloneqq {\rm tr}(V^{\top}W)$, and the Hadamard product of $V$ and $W$ is written as $V \circ W \coloneqq ([V]_{ij}[W]_{ij}) \in \R^{p \times q}$. Note that if $q = 1$, then $\left\langle \cdot, \cdot \right\rangle$ is the inner product of vectors in $\R^{p}$. We denote by $I$ and $e$ the identity matrix and the all-ones vector, respectively, where these dimensions are defined by the context. For $w \in \R^{p}$, $\Vert w \Vert$ represents the Euclidean norm of $w$ defined by $\Vert w \Vert \coloneqq \sqrt{\left\langle w, w \right\rangle}$. For $W \in \R^{p \times q}$, $\Vert W \Vert_{{\rm F}}$ denotes the Frobenius norm of $W$ defined by $\Vert W \Vert_{{\rm F}} \coloneqq \sqrt{\left\langle W, W \right\rangle}$, and $\Vert W \Vert_{2}$ stands for the operator norm of $W$ defined by $\Vert W \Vert_{2} \coloneqq \sup \{ \Vert W u \Vert ; \Vert u \Vert \leq 1 \}$. We define the norm of product space ${\cal Z} \coloneqq \R^{p_{1} \times q_{1}} \times \R^{p_{2} \times q_{2}}$ as $\Vert z \Vert \coloneqq \Vert V \Vert_{{\rm F}} + \Vert W \Vert_{{\rm F}}$ for $z = (V, W) \in {\cal Z}$. Note that if $q_{1} = 1$, then the norm of ${\cal Z} = \R^{p_{1}} \times \R^{p_{2} \times q_{2}}$ is equivalent to $\Vert z \Vert = \Vert w \Vert + \Vert W \Vert_{{\rm F}}$ for $z = (w, W) \in {\cal Z}$. For $r_{1}, \ldots, r_{\ell} \in \R$, let us define ${\rm diag} \left[ r_{1}, \ldots, r_{\ell} \right]$ as the diagonal matrix whose $(i, i)$-th entry is equal to $r_{i}$. Let $U \in \S^{\ell}$ be a matrix with an orthogonal diagonalization $U = P D P^{\top}$, where $P$ is an orthogonal matrix and $D$ is a diagonal matrix. The eigenvalues of $U$ are indicated by $\lambda_{1}^{P}(U), \ldots, \lambda_{\ell}^{P}(U)$, and they satisfy $D = {\rm diag}[\lambda_{1}^{P}(U), \ldots, \lambda_{\ell}^{P}(U)]$. The minimum and the maximum eigenvalues of $U$ are respectively represented as $\lambda_{\min}(U)$ and $\lambda_{\max}(U)$. Let $\varphi$ be a function from $\R^{p}$ to $\R$. The gradient of $\varphi$ at $w \in \R^{p}$ is represented as $\nabla \varphi(w)$ or $\nabla_{w} \varphi(w)$. The Hessian of $\varphi$ at $w \in \R^{p}$ is indicated by $\nabla^{2} \varphi(w)$ or $\nabla_{ww}^{2} \varphi(w)$. Let ${\cal X}$ be a finite-dimensional real vector space equipped with an inner product $\langle \cdot, \cdot \rangle$ and its induced norm $\Vert \cdot \Vert$. Let ${\cal K} \subset {\cal X}$ and ${\cal L} \subset {\cal X}$ be arbitrary sets. We define ${\cal K} + {\cal L} \coloneqq \{ \eta + \mu; \eta \in {\cal K}, \mu \in {\cal L} \}$ and denote $\alpha {\cal K} \coloneqq \{ \alpha \eta; \eta \in {\cal K} \}$ for any $\alpha \in \R$. We write ${\rm conv}({\cal K})$ to indicate the convex hull of ${\cal K}$. If ${\cal K}$ is a non-empty set, then the tangent and normal cones of ${\cal K}$ at $\eta \in {\cal K}$ are respectively represented by $\T_{{\cal K}}(\eta)$ and $\N_{{\cal K}}(\eta)$. If ${\cal K}$ is a non-empty closed convex set, then $\P_{{\cal K}}(\eta)$ indicates the metric projection of $\eta \in {\cal X}$ on ${\cal K}$. For $\eta \in {\cal X}$ and $r > 0$, we define $B(\eta, r) \coloneqq \{ \mu \in {\cal X}; \Vert \mu - \eta \Vert \leq r \}$. We also use the notation $B \coloneqq B(0, 1)$ for simplicity. Let ${\cal F}$ and ${\cal G}$ be functions from ${\cal U}$ to ${\cal Y}$, where ${\cal U} \subset {\cal X}$ is an open set and ${\cal Y}$ is a finite-dimensional real vector space equipped with an inner product $\langle \cdot, \cdot \rangle$ and its induced norm $\Vert \cdot \Vert$. The Fr\'echet differential of ${\cal F}$ at $\eta \in {\cal U}$ is defined by ${\cal F}^{\prime}(\eta)$. The directional derivative of ${\cal F}$ at $\eta \in {\cal U}$ along $\mu \in {\cal U}$ is defined as ${\cal F}^{\prime}(\eta; \mu)$. We write ${\cal F}(\eta) = {\cal O}({\cal G}(\eta))~(\eta \to \mu)$ if there exists $r > 0$ and $c > 0$ such that $\Vert {\cal F}(\eta) \Vert \leq c \Vert {\cal G}(\eta) \Vert$ for all $\eta \in B(\mu, r)$.

\section{Preliminaries} \label{section:pre}
This section provides some notation and terminologies.

\subsection{Optimality conditions for NSDP}
To begin with, we define the following notation regarding the functions $g$ and $X$ in problem~\eqref{NSDP}:
\begin{itemize}
\item $g_{i} \colon \R^{n} \to \R$ denotes the $i$-th function of $g$, i.e., $g(x) = [g_{1}(x)  \cdots g_{m}(x)]^{\top}$;
\item $\nabla g(x) \in \R^{n \times m}$ means $\nabla g(x) \coloneqq [\nabla g_{1}(x) \cdots \nabla g_{m}(x)]$;
\item $A_{j}(x) \in \S^{d}$ indicates $A_{j}(x) \coloneqq \frac{\partial}{\partial [x]_{j}} X(x)$ for $j=1, \ldots, n$;
\item $\A(x) \colon \R^{n} \to \S^{d}$ is defined by $\A(x) u \coloneqq [u]_{1} A_{1}(x) + \cdots + [u]_{n} A_{n}(x)$ for all $u \in \R^{n}$;
\item the adjoint operator of $\A(x)$ is represented by $\A^{\ast}(x) \colon \S^{d} \to \R^{n}$, that is, $\A^{\ast}(x) U = [\langle A_{1}(x), U \rangle \cdots \langle A_{n}(x), U \rangle]^{\top}$ for all $U \in \S^{d}$.
\end{itemize}
Next, we define $\V \coloneqq \R^{n} \times \R^{m} \times \S^{d}$ and the Lagrange function $L \colon \V \to \R$ as
\begin{align*}
L(v) \coloneqq f(x) - \langle y, g(x) \rangle - \left\langle Z, X(x) \right\rangle,
\end{align*}
where $v \coloneqq (x, y, Z) \in \V$, and $y$ and $Z$ are Lagrange multipliers for $g(x) = 0$ and $X(x) \succeq O$, respectively. 
\par
Now we recall the definition of the Karush-Kuhn-Tucker (KKT) conditions for problem~\eqref{NSDP}. 
\begin{definition}
We say that $v = (x, y, Z) \in \V$ satisfies the KKT conditions if
\begin{align*} 
\displaystyle \nabla_{x} L(v) = 0, \quad g(x) = 0, \quad X(x) - \P_{\S_{+}^{d}}(X(x) - Z) = O.
\end{align*}
In particular, the above point $x$ is called a stationary point, and the triplet $v = (x, y, Z)$ is called a KKT point.
\end{definition}
The KKT conditions are known as the first-order optimality conditions for NSDP~\eqref{NSDP} under some CQ. Although there exist some CQs for NSDP~\eqref{NSDP}, we provide the SRCQ described below.
\begin{definition}
We say that a KKT point $(x,y,Z) \in \V$ satisfies the strict Robinson's constraint qualification if
\begin{align*}
\left[
\begin{array}{c}
\nabla g(x)^{\top}
\\
\A(x)
\end{array}
\right] \R^{n} + \left[
\begin{array}{c}
\{ 0 \}
\\
\T_{\S_{+}^{d}}(X(x)) \cap Z^{\perp}
\end{array}
\right] = \left[
\begin{array}{c}
\R^{m}
\\
\S^{d}
\end{array}
\right],
\end{align*}
where $Z^{\perp} \coloneqq \{ W \in \S^{d}; \langle W, Z \rangle = 0 \}$.
\end{definition}
Let $x^{\ast} \in \R^{n}$ be a stationary point of NSDP~\eqref{NSDP}. For the point $x^{\ast}$, we define the set $\M(x^{\ast})$ by
\begin{align*}
\M(x^{\ast}) \coloneqq \{ (y, Z) \in \R^{m} \times \S^{d}; \mbox{$(x^{\ast}, y, Z)$ satisfies the KKT conditions} \}.
\end{align*}
The SRCQ implies Robinson's CQ (RCQ) which is one of the well-known CQs and a sufficient condition under which $\M(x^{\ast})$ is nonempty and bounded. Moreover, the SRCQ is known as a sufficient condition for the uniqueness of the Lagrange multiplier pair, that is, $\M(x^{\ast})$ is a singleton set. For details of the facts, see~\cite[Section~5]{BoSh00}. In what follows, we assume that the SRCQ is satisfied at a KKT point $v^{\ast} \coloneqq (x^{\ast}, y^{\ast}, Z^{\ast})$, where $(y^{\ast}, Z^{\ast}) \in \Lambda(x^{\ast})$. We then notice that the assumption implies $\Lambda(x^{\ast}) = \{ (y^{\ast}, Z^{\ast}) \}$.
\par
Next, we provide the definition of the SOSC under the SRCQ. In this paper, we also suppose that the SOSC holds at $v^{\ast}$.
\begin{definition}
Let $v = (x, y, Z) \in \V$ be a KKT point of the NSDP~\eqref{NSDP} and let $v$ satisfy the SRCQ. We say that the second-order sufficient condition holds at $v$ if
\begin{align*}
\langle ( \nabla^{2}_{xx} L(v) + {\cal H}(x, Z) ) d, d \rangle > 0 \quad \forall d \in C(x) \backslash \{ 0 \},
\end{align*}
where the $(i,j)$-entry of the matrix ${\cal H}(x, Z)$ is given by
\begin{align*}
[{\cal H}(x, Z)]_{ij} \coloneqq 2 \langle Z, A_{i}(x)X(x)^{\dagger}A_{j}(x) \rangle,
\end{align*}
and the critical set $C(x)$ is defined as
\begin{align*}
C(x) \coloneqq \left\{ d \in \R^{n}; \nabla f(x)^{\top} d = 0, \nabla g(x)^{\top} d = 0, \A(x)d \in \T_{\S_{+}^{d}}(X(x)) \right\}.
\end{align*}
\end{definition}
\par
Note that the matrices $X(x^{\ast})$ and $Z^{\ast}$ satisfy $X(x^{\ast})Z^{\ast} = Z^{\ast} X(x^{\ast}) = O$ because the point $v^{\ast}$ satisfies the KKT conditions. Without loss of generality, they can be simultaneously diagonalized as follows.
\begin{align}
X(x^{\ast}) = P D P^{\top}, \quad Z^{\ast} = P \Lambda P^{\top}, \label{diagonalize:matXZ}
\end{align}
where $D$ and $\Lambda$ are some diagonal matrices given by
\begin{align*}
D = \left[
\begin{array}{ccc}
D_{\alpha} & O & O
\\
O & O & O
\\
O & O & O
\end{array}
\right], \quad D_{\alpha} \succ O, \quad \Lambda = \left[
\begin{array}{ccc}
O & O & O
\\
O & O & O
\\
O & O & \Lambda_{\gamma}
\end{array}
\right], \quad \Lambda_{\gamma} \succ O,
\end{align*}
and $P$ is some orthogonal matrix. Let $M^{\ast} \coloneqq X(x^{\ast}) - Z^{\ast}$, $\alpha \coloneqq \{ i \in \mathbb{N}; \lambda_{i}(M^{\ast}) > 0 \}$, $\beta \coloneqq \{ i \in \mathbb{N}; \lambda_{i}(M^{\ast}) = 0 \}$, and $\gamma \coloneqq \{ i \in \mathbb{N}; \lambda_{i}(M^{\ast}) < 0 \}$. Moreover, we define $P_{\alpha} \in \R^{d \times |\alpha|}$, $P_{\beta} \in \R^{d \times |\beta|}$, and $P_{\gamma} \in \R^{d \times |\gamma|}$ as three submatrices satisfying
\begin{align}
P = [ P_{\alpha} ~ P_{\beta} ~ P_{\gamma} ], \label{def:matP}
\end{align}
where notice that the cardinality of a set $S$ is represented as $|S|$. Using the above submatrices, the tangent cone $\T_{\S_{+}^{d}}(X(x^{\ast}))$ is given by
\begin{align}
\T_{\S_{+}^{d}}(X(x^{\ast})) = \left\{ M \in \S^{d}; [P_{\beta} ~ P_{\gamma}]^{\top} M [P_{\beta} ~ P_{\gamma}] \succeq O \right\}. \label{property:TSX}
\end{align}
For the details, see~\cite{Su06}.

\subsection{Perturbed problem related to NSDP}
For a given perturbed parameter $u \coloneqq (r,s,T) \in \V$, we define the following perturbed optimization problem related to~\eqref{NSDP}.
\begin{align}
\begin{aligned} \label{perturbedNSDP}
& \mini_{x \in \R^{n}} & & f(x) + \langle r, x \rangle
\\ 
& \subj & & g(x) + s = 0, ~ X(x) + T \succeq O.
\end{aligned}
\end{align}
The KKT conditions of \eqref{perturbedNSDP} are provided below.
\begin{align}
\nabla_{x} L(v) + r = 0, ~\, g(x) + s = 0, ~\, X(x) + T - P_{\S_{+}^{d}}( X(x) + T - Z ) = O. \label{perturbedKKT}
\end{align}
We denote by $\U(x, u)$ the set of Lagrange multipliers associated with $(x, u) \in \R^{n} \times \V$, i.e.,
\begin{align*}
\U(x, u) \coloneqq \{ (y, Z) \in \R^{m} \times \S^{d}; \mbox{$(x, y, Z)$ satisfies~\eqref{perturbedKKT} with $u = (r,s,T)$} \}.
\end{align*}
Note that $\M(x^{\ast}) = \U(x^{\ast}, 0)$.

\section{Stabilized SQSDP method} \label{section:stabilized_SQSDP}
The purpose of the section is to propose a locally convergent stabilized SQSDP method for problem~\eqref{NSDP}. Before providing the method, we first explain a brief outline of the SQSDP methods.
\par
The SQSDP methods are one kind of the extension of the SQP methods to solve NLP problems and they sequentially solve QSDP subproblems to find a search direction and the new Lagrange multipliers at each iteration. In particular, for a given iterate $v = (x, y, Z) \in \V$, the QSDP subproblem is provided as
\begin{align} \label{subproblem:SQSDP}
\begin{aligned}
& \mini_{\xi \in \R^{n}} & & \langle \nabla f(x), \xi \rangle + \frac{1}{2} \langle H(v) \xi, \xi \rangle
\\
& \subj & & g(x) + \nabla g(x)^{\top} \xi = 0, ~ X(x) + \A(x) \xi \succeq O,
\end{aligned}
\end{align}
where $H(v) \in \S^{n}$ is the Hessian matrix $\nabla_{xx}^{2} L(v)$ or its approximation satisfying $H(v) \to \nabla_{xx}^{2}L(v^{\ast})$ as $v \to v^{\ast}$. As stated above, the SQSDP methods find the KKT point $(\overline{\xi}, \overline{\zeta}, \overline{\Sigma}) \in \V$ of \eqref{subproblem:SQSDP}, adopt $\overline{\xi} \in \R^{n}$ as a search direction, and set $(\overline{\zeta}, \overline{\Sigma} ) \in \R^{m} \times \S_{+}^{d}$ as a new Lagrange multiplier pair.
\par
Now we recall that QSDP subproblem~\eqref{subproblem:SQSDP} is derived from the following minimax problem:
\begin{align} \label{problem:minimax_L}
\begin{aligned}
& \mini_{x \in \R^{n}} \maxi_{(y,Z) \in \R^{m} \times \S_{+}^{d}} & & L(x, y, Z).
\end{aligned}
\end{align}
Since solving \eqref{problem:minimax_L} is equivalent to finding the KKT point of \eqref{NSDP}, one might consider the following approximation of \eqref{problem:minimax_L} at $v \in \V$:
\begin{align} \label{problem:appro_minimax}
\begin{array}{cl}
\displaystyle \mini_{\xi \in \R^{n}} \maxi_{(\zeta, \Sigma) \in \R^{m} \times \S_{+}^{d}} & \displaystyle \langle \nabla f(x), \xi \rangle + \frac{1}{2} \langle H(v) \xi, \xi \rangle 
\\
& \quad - \langle \zeta, g(x) + \nabla g(x)^{\top} \xi \rangle - \langle \Sigma, X(x) + \A(x) \xi \rangle.
\end{array}
\end{align}
Then notice that \eqref{problem:appro_minimax} is equivalent to \eqref{subproblem:SQSDP}, and hence we can see the validity that the SQSDP methods iteratively solve subproblem~\eqref{subproblem:SQSDP}. 
\par
On the other hand, the stabilized SQSDP methods adopt the following approximation of \eqref{problem:minimax_L}:
\begin{align} \label{problem:stab_appro_minimax_L}
\begin{array}{cl}
\displaystyle \mini_{\xi \in \R^{n}} \maxi_{(\zeta, \Sigma) \in \R^{m} \times \S_{+}^{d}} 
& \displaystyle \langle \nabla f(x), \xi \rangle + \frac{1}{2} \langle H(v) \xi, \xi \rangle 
\\
& \quad - \langle \zeta, g(x) + \nabla g(x)^{\top} \xi \rangle - \langle \Sigma, X(x) + \A(x) \xi \rangle \vspace{2.5mm}
\\
& \quad \displaystyle - \frac{\sigma}{2} \Vert \zeta - y \Vert^{2} - \frac{\sigma}{2} \Vert \Sigma - Z \Vert_{{\rm F}}^{2},
\end{array}
\end{align}
where $\sigma > 0$ is a penalty parameter. The maximization part in \eqref{problem:stab_appro_minimax_L} has a unique optimum for each $\xi \in \R^{n}$ because of the  quadratic terms $\frac{\sigma}{2} \Vert \zeta - y \Vert^{2}$ and $\frac{\sigma}{2} \Vert \Sigma - Z \Vert_{{\rm F}}^{2}$. Furthermore, these quadratic terms have calming effect on a sequence of the Lagrange multipliers $\{ (y_{k}, Z_{k} ) \} \subset \R^{m} \times \S^{d}$, and we can easily confirm that solving \eqref{problem:stab_appro_minimax_L} is equivalent to finding the KKT point of 
\begin{align} \label{subproblem:stabilizedSQSDP}
\begin{aligned}
& \mini_{(\xi, \zeta, \Sigma) \in \V} & & \langle \nabla f(x), \xi \rangle + \frac{1}{2} \langle H(v) \xi, \xi \rangle + \frac{\sigma}{2} \Vert \zeta \Vert^{2} + \frac{\sigma}{2} \Vert \Sigma \Vert_{{\rm F}}^{2}
\\
& \subj & & g(x) + \nabla g(x)^{\top} \xi + \sigma (\zeta - y) = 0,
\\
& & & X(x) + \A(x) \xi + \sigma (\Sigma - Z) \succeq O.
\end{aligned}
\end{align}
From the calming effect stated above, this is called the stabilized QSDP subproblem. Note that this problem is always feasible whereas the standard subproblem~\eqref{subproblem:SQSDP} does not satisfy such a property.
\par
Finally, we propose a locally convergent stabilized SQSDP method. To this end, let us define the function $\sigma \colon \V \to \R$ as
\begin{align}
\sigma(v) \coloneqq \Vert \nabla_{x} L(v) \Vert + \Vert g(x) \Vert + \Vert X(x) - \P_{\S_{+}^{d}}(X(x) - Z) \Vert_{{\rm F}}. \label{def:sigma}
\end{align}
This function indicates the distance between a given point $v$ and the KKT point $v^{\ast}$, namely, $\sigma(v^{\ast}) = 0$, and is utilized as the penalty parameter $\sigma$ appeared in stabilized subproblem~\eqref{subproblem:stabilizedSQSDP}. The proposed stabilized SQSDP method is as follows:
\begin{algorithm}[tbh]
\caption{(Locally convergent stabilized SQSDP method)} \label{Local_SQSDP}
\begin{algorithmic}[1]
\State{Choose $v_{0} \coloneqq (x_{0}, y_{0}, Z_{0}) \in \V$. Set $k \coloneqq 0$ and $\sigma_{0} \coloneqq \sigma(v_{0})$.} \Comment{Step~0}

\State{If $v_{k}$ satisfies the termination criterion, then stop.} \Comment{Step~1}

\State{Set $H_{k} \coloneqq H(v_{k})$ and $\sigma_{k} \coloneqq \sigma(v_{k})$. Solve the stabilized subproblem} 
\begin{align*}
\begin{aligned}
& \mini_{(\xi, \zeta, \Sigma) \in \V} & & \langle \nabla f(x_{k}), \xi \rangle + \frac{1}{2} \langle H_{k} \xi, \xi \rangle + \frac{\sigma_{k}}{2} \Vert \zeta \Vert^{2} + \frac{\sigma_{k}}{2} \Vert \Sigma \Vert_{{\rm F}}^{2}
\\
& \subj & & g(x_{k}) + \nabla g(x_{k})^{\top} \xi + \sigma_{k} (\zeta - y_{k}) = 0,
\\
& & & X(x_{k}) + \A(x_{k}) \xi + \sigma_{k} (\Sigma - Z_{k}) \succeq O,
\end{aligned}
\end{align*}
and obtain its solution $(\overline{\xi}, \overline{\zeta}, \overline{\Sigma}) \in \V$. \Comment{Step~2}

\State{Set $v_{k+1} \coloneqq (x_{k} + \overline{\xi}, \overline{\zeta}, \overline{\Sigma})$ and $\sigma_{k+1} \coloneqq \sigma(v_{k+1})$.} \Comment{Step~3}

\State{Update the iteration as $k \leftarrow k+1$ and go back to Step~1.} \Comment{Step~4}

\end{algorithmic}
\end{algorithm}
\begin{remark}
As stated in Section~\ref{section:intro}, there are some existing studies associated with the local convergence of the SQSDP methods~\cite{FaNoAp02,LiZh19,ZhCh16,ZhCh18}. Their convergence analyses exploit the Newton equation derived from the KKT conditions of subproblem~\eqref{subproblem:SQSDP}. However, the convergence analyses of the SQSDP methods cannot be utilized in Algorithm~\ref{Local_SQSDP} because the structures of subproblems~\eqref{subproblem:SQSDP} and \eqref{subproblem:stabilizedSQSDP} are different. Moreover, the convergence analyses of the existing papers~\cite{gill2017stabilizedsup,Ha99,Wr98} regarding the stabilized SQP methods for NLP problems are also based on the Newton equation related to the stabilized subproblem which is obtained by using the active set of the inequality constraints. Hence, it is difficult to extend those ways into the analysis of this paper. In the following analysis regarding subproblem~\eqref{subproblem:stabilizedSQSDP}, we provide a new way of using a perturbed problem associated with \eqref{NSDP}. One of the remarkable points of this paper is to prove fast local convergence of Algorithm~\ref{Local_SQSDP} based on the new analysis which overcomes the difficulty related to the stabilized subproblem.
\end{remark}

\section{Local convergence of Algorithm~\ref{Local_SQSDP}} \label{section:local_convergence}
In this section, we prove fast local convergence of Algorithm~\ref{Local_SQSDP}. To this end, we first make the following assumptions:
\begin{description}
\item[(A1)] The SRCQ holds at $v^{\ast}$;
\item[(A2)] the SOSC holds at $v^{\ast}$;
\item[(A3)] the second derivatives of $f$, $g$, and $X$ are locally Lipschitz continuous on some neighborhood of $x^{\ast}$.
\end{description}
In addition to the above assumptions, we suppose that Algorithm~\ref{Local_SQSDP} generates an infinite sequence $\{ v_{k} \} \subset \V$ such that $v_{k} \not = v^{\ast}$ for every $k \in \mathbb{N} \cup \{ 0 \}$, and suppose that $H(v) \to \nabla_{xx}^{2} L(v^{\ast})$ as $v \to v^{\ast}$, where recall that $H(v) \in \S^{n}$ is the Hessian matrix $\nabla_{xx}^{2} L(v)$ or its approximation. Throughout the section, we frequently use the notation $M^{\ast} = X(x^{\ast}) - Z^{\ast}$.
We emphasize that the strict complementarity condition, constraint nondegeneracy, and strong SOSC are not assumed in the following analysis whereas all or some of them were required in the existing papers~\cite{LiZh19,ZhCh16,ZhCh18} which focus on local convergence of the SQSDP methods.
\par
Under assumptions (A1) and (A2), the function $\sigma$ defined by~\eqref{def:sigma} enjoys the error bound property. Before showing this property, let us provide the following helpful lemma.

\begin{lemma} \label{lemma:base}
Suppose that {\rm (A1)} and {\rm (A2)} are satisfied. If $(\widehat{x}, \widehat{y}, \widehat{Z}) \in \V$ satisfies
\begin{align*}
& \nabla_{xx}^{2} L(v^{\ast}) \widehat{x} - \nabla g(x^{\ast}) \widehat{y} - \A^{\ast}(x^{\ast}) \widehat{Z} = 0,
\\
& \nabla g(x^{\ast})^{\top} \widehat{x} = 0,
\\
& \A(x^{\ast}) \widehat{x} - {\cal P}_{\S_{+}^{d}}^{\prime}(X(x^{\ast}) - Z^{\ast}; \A(x^{\ast}) \widehat{x} - \widehat{Z}) = O,
\end{align*}
then $(\widehat{x}, \widehat{y}, \widehat{Z}) = (0,0,O)$.
\end{lemma}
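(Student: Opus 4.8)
The plan is to argue in two stages: use the SOSC (A2) to force $\widehat{x}=0$, and then use the SRCQ (A1) to force $\widehat{y}=0$ and $\widehat{Z}=O$. Throughout I would pass to the eigenbasis $P=[P_{\alpha}~P_{\beta}~P_{\gamma}]$ of $M^{\ast}=X(x^{\ast})-Z^{\ast}$ from~\eqref{diagonalize:matXZ}--\eqref{def:matP}, and set $\widetilde{W}\coloneqq P^{\top}(\A(x^{\ast})\widehat{x})P$ and $\widetilde{Z}\coloneqq P^{\top}\widehat{Z}P$, partitioned into the induced $3\times3$ blocks indexed by $\alpha,\beta,\gamma$. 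The main tool is the classical closed form of the directional derivative ${\cal P}_{\S_{+}^{d}}^{\prime}(M^{\ast};\cdot)$ in this basis (cf.~\cite{Su06}): for $\widetilde{H}=P^{\top}HP$ it acts as the identity on the $\alpha\alpha,\alpha\beta,\beta\alpha$ blocks, as ${\cal P}_{\S_{+}^{|\beta|}}(\cdot)$ on the $\beta\beta$ block, as the Hadamard product with the matrix of factors $\lambda_{i}/(\lambda_{i}-\lambda_{j})\in(0,1)$ (with $\lambda_{i}>0>\lambda_{j}$ eigenvalues of $M^{\ast}$) on the $\alpha\gamma,\gamma\alpha$ blocks, and as zero on the $\beta\gamma,\gamma\beta,\gamma\gamma$ blocks. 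Plugging $H=\A(x^{\ast})\widehat{x}-\widehat{Z}$ into the third displayed equation and matching blocks then yields: $\widetilde{Z}_{\alpha\alpha}=\widetilde{Z}_{\alpha\beta}=O$; $\widetilde{W}_{\beta\gamma}=\widetilde{W}_{\gamma\gamma}=O$; the entrywise relation $[\widetilde{Z}_{\alpha\gamma}]_{ij}=(\lambda_{j}/\lambda_{i})\,[\widetilde{W}_{\alpha\gamma}]_{ij}$; and, by applying Moreau's decomposition to $\widetilde{W}_{\beta\beta}-\widetilde{Z}_{\beta\beta}$ in the $\beta\beta$ relation, $\widetilde{Z}_{\beta\beta}\succeq O$ with $\langle\widetilde{Z}_{\beta\beta},\widetilde{W}_{\beta\beta}\rangle=0$.

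In the first stage I would check $\widehat{x}\in C(x^{\ast})$: indeed $\nabla g(x^{\ast})^{\top}\widehat{x}=0$ is the second hypothesis, $\nabla f(x^{\ast})^{\top}\widehat{x}=\langle Z^{\ast},\A(x^{\ast})\widehat{x}\rangle=\langle\Lambda_{\gamma},\widetilde{W}_{\gamma\gamma}\rangle=0$ follows from $\nabla_{x}L(v^{\ast})=0$ and $\widetilde{W}_{\gamma\gamma}=O$, and $\A(x^{\ast})\widehat{x}\in\T_{\S_{+}^{d}}(X(x^{\ast}))$ holds because, by~\eqref{property:TSX}, $[P_{\beta} ~ P_{\gamma}]^{\top}(\A(x^{\ast})\widehat{x})[P_{\beta} ~ P_{\gamma}]$ has $\beta\beta$-block $\widetilde{W}_{\beta\beta}={\cal P}_{\S_{+}^{|\beta|}}(\widetilde{W}_{\beta\beta}-\widetilde{Z}_{\beta\beta})\succeq O$ and all other blocks zero. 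Next I would evaluate the critical form: from the first hypothesis, $\langle\nabla_{xx}^{2}L(v^{\ast})\widehat{x},\widehat{x}\rangle=\langle\widehat{y},\nabla g(x^{\ast})^{\top}\widehat{x}\rangle+\langle\widehat{Z},\A(x^{\ast})\widehat{x}\rangle=\langle\widetilde{Z},\widetilde{W}\rangle$, and the block relations above collapse this to $2\langle\widetilde{Z}_{\alpha\gamma},\widetilde{W}_{\alpha\gamma}\rangle=2\sum_{i\in\alpha,j\in\gamma}(\lambda_{j}/\lambda_{i})\,[\widetilde{W}_{\alpha\gamma}]_{ij}^{2}$, while a direct computation of $\langle{\cal H}(x^{\ast},Z^{\ast})\widehat{x},\widehat{x}\rangle=2\langle Z^{\ast},(\A(x^{\ast})\widehat{x})X(x^{\ast})^{\dagger}(\A(x^{\ast})\widehat{x})\rangle$ gives $-2\sum_{i\in\alpha,j\in\gamma}(\lambda_{j}/\lambda_{i})\,[\widetilde{W}_{\alpha\gamma}]_{ij}^{2}$. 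These cancel, so $\langle(\nabla_{xx}^{2}L(v^{\ast})+{\cal H}(x^{\ast},Z^{\ast}))\widehat{x},\widehat{x}\rangle=0$; since $\widehat{x}\in C(x^{\ast})$, the SOSC forces $\widehat{x}=0$.

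In the second stage, with $\widehat{x}=0$ the third equation becomes ${\cal P}_{\S_{+}^{d}}^{\prime}(M^{\ast};-\widehat{Z})=O$, which by the closed form gives $P_{\alpha}^{\top}\widehat{Z}P_{\alpha}=P_{\alpha}^{\top}\widehat{Z}P_{\beta}=P_{\alpha}^{\top}\widehat{Z}P_{\gamma}=O$ and ${\cal P}_{\S_{+}^{|\beta|}}(-P_{\beta}^{\top}\widehat{Z}P_{\beta})=O$, i.e.\ $-P_{\beta}^{\top}\widehat{Z}P_{\beta}\preceq O$. I would then determine ${\cal C}\coloneqq\T_{\S_{+}^{d}}(X(x^{\ast}))\cap(Z^{\ast})^{\perp}$ explicitly: since $Z^{\ast}=P_{\gamma}\Lambda_{\gamma}P_{\gamma}^{\top}$ with $\Lambda_{\gamma}\succ O$, combining $\langle Z^{\ast},M\rangle=0$ with positive semidefiniteness of $[P_{\beta} ~ P_{\gamma}]^{\top}M[P_{\beta} ~ P_{\gamma}]$ (from~\eqref{property:TSX}) forces the $\gamma\gamma$ block and then the $\beta\gamma$ block of $M$ to vanish, so ${\cal C}=\{M:\,P_{\beta}^{\top}MP_{\beta}\succeq O,\ P_{\beta}^{\top}MP_{\gamma}=O,\ P_{\gamma}^{\top}MP_{\gamma}=O\}$; hence its polar cone is ${\cal C}^{\circ}=\{N:\,P_{\alpha}^{\top}NP_{\alpha}=P_{\alpha}^{\top}NP_{\beta}=P_{\alpha}^{\top}NP_{\gamma}=O,\ P_{\beta}^{\top}NP_{\beta}\preceq O\}$. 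Comparing with the structure of $\widehat{Z}$ above shows $-\widehat{Z}\in{\cal C}^{\circ}$. Finally the first equation with $\widehat{x}=0$ reads $\nabla g(x^{\ast})(-\widehat{y})+\A^{\ast}(x^{\ast})(-\widehat{Z})=0$; taking polars in the definition of the SRCQ shows it is equivalent to ``$\nabla g(x^{\ast})y+\A^{\ast}(x^{\ast})Z=0$ and $Z\in{\cal C}^{\circ}$ imply $(y,Z)=(0,O)$'', and applying this with $(y,Z)=(-\widehat{y},-\widehat{Z})$ gives $\widehat{y}=0$, $\widehat{Z}=O$, finishing the proof.

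The step I expect to be most delicate is the second stage: rewriting the SRCQ in its polar (dual) form, computing ${\cal C}$ and ${\cal C}^{\circ}$ explicitly, and keeping track of signs in the $\beta\beta$ block, since the third equation produces $P_{\beta}^{\top}\widehat{Z}P_{\beta}\succeq O$ whereas membership in ${\cal C}^{\circ}$ is stated through negative semidefiniteness, so one must apply the dual SRCQ to $-\widehat{Z}$ rather than to $\widehat{Z}$ (this is harmless because $\{(y,Z):\nabla g(x^{\ast})y+\A^{\ast}(x^{\ast})Z=0\}$ is a linear subspace). The remaining ingredients --- verifying $\widehat{x}\in C(x^{\ast})$ and the block-by-block cancellation producing $\langle(\nabla_{xx}^{2}L(v^{\ast})+{\cal H}(x^{\ast},Z^{\ast}))\widehat{x},\widehat{x}\rangle=0$ --- are routine once the closed form of ${\cal P}_{\S_{+}^{d}}^{\prime}(M^{\ast};\cdot)$ is in hand.
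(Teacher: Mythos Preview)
Your proposal is correct and matches the paper's proof essentially step for step: the same block analysis of ${\cal P}_{\S_{+}^{d}}^{\prime}(M^{\ast};\cdot)$ in the eigenbasis $P$, the same verification that $\widehat{x}\in C(x^{\ast})$ with $\langle(\nabla_{xx}^{2}L(v^{\ast})+{\cal H}(x^{\ast},Z^{\ast}))\widehat{x},\widehat{x}\rangle=0$, and the same use of SRCQ once $\widehat{x}=0$. The only cosmetic difference is in the SRCQ step, where the paper applies the primal surjectivity directly --- choosing $\widehat{\eta}\in\R^{n}$ and $\widehat{\Gamma}\in\T_{\S_{+}^{d}}(X(x^{\ast}))\cap(Z^{\ast})^{\perp}$ with $\nabla g(x^{\ast})^{\top}\widehat{\eta}=-\widehat{y}$, $\A(x^{\ast})\widehat{\eta}+\widehat{\Gamma}=-\widehat{Z}$, and then computing $\Vert\widehat{y}\Vert^{2}+\Vert\widehat{Z}\Vert_{{\rm F}}^{2}=-\langle\widehat{Z},\widehat{\Gamma}\rangle=-{\rm tr}[P_{\beta}^{\top}\widehat{Z}P_{\beta}\,P_{\beta}^{\top}\widehat{\Gamma}P_{\beta}]\leq 0$ from the block information --- rather than passing to the polar cone ${\cal C}^{\circ}$ as you do; the two arguments are equivalent and use exactly the same block facts about $\widehat{Z}$.
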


\begin{proof}
We begin by showing
\begin{align}
- \langle \A(x^{\ast}) \widehat{x}, \widehat{Z} \rangle = \langle {\cal H}(x^{\ast}, Z^{\ast}) \widehat{x}, \widehat{x} \rangle. \label{eq:AZHx}
\end{align}
Let us define $\widehat{N} \coloneqq \A(x^{\ast}) \widehat{x} - \widehat{Z}$ for simplicity. Since $\A(x^{\ast}) \widehat{x} = \P_{\S_{+}^{d}}^{\prime}(M^{\ast}; \widehat{N})$ holds, it follows from \cite[Equation~(16)]{Su06} that
\begin{align} \label{eq:Proj2}
P^{\top} (\A(x^{\ast})\widehat{x}) P = \left[
\begin{array}{ccc}
P_{\alpha}^{\top} \widehat{N} P_{\alpha} & P_{\alpha}^{\top} \widehat{N} P_{\beta} & U_{\alpha \gamma} \circ P_{\alpha}^{\top} \widehat{N} P_{\gamma}
\\
P_{\beta}^{\top} \widehat{N} P_{\alpha} & {\cal P}_{\S_{+}^{|\beta|}}(P_{\beta}^{\top} \widehat{N} P_{\beta}) & O
\\
U_{\alpha \gamma}^{\top} \circ P_{\gamma}^{\top} \widehat{N} P_{\alpha} & O & O
\end{array}
\right],
\end{align}
where notice that $P$ satisfies \eqref{def:matP}, $U \coloneqq [u_{ij}] \in \S^{d}$ is defined by
\begin{align*}
u_{ij} \coloneqq \left\{
\begin{array}{lll}
1 & ~ & {\rm if} ~ i, j \in \beta,
\\
\displaystyle \frac{\max \{ \lambda_{i}(M^{\ast}), 0 \} + \max \{ \lambda_{j}(M^{\ast}), 0 \}}{|\lambda_{i}(M^{\ast})| + |\lambda_{j}(M^{\ast})|} & ~ & {\rm otherwise},
\end{array}
\right.
\end{align*}
and $U_{\alpha \gamma} \coloneqq [u_{ij}]_{i \in \alpha, j \in \gamma} \in \R^{|\alpha| \times |\gamma|}$. Moreover, each block matrix of~\eqref{eq:Proj2} implies
\begin{gather}
P_{\beta}^{\top} (\A(x^{\ast}) \widehat{x}) P_{\gamma} = O, \, P_{\gamma}^{\top} (\A(x^{\ast}) \widehat{x}) P_{\gamma} = O, \, P_{\alpha}^{\top} \widehat{Z} P_{\alpha} = O, \, P_{\alpha}^{\top} \widehat{Z} P_{\beta} = O, \label{eq:AZblock}
\\
P_{\beta}^{\top} (\A(x^{\ast}) \widehat{x}) P_{\beta} = \P_{\S_{+}^{|\beta|}} (P_{\beta}^{\top} \widehat{N} P_{\beta}), \label{eq:APZ}
\\
[P^{\top} \widehat{Z} P]_{ij} = \lambda_{j}(M^{\ast}) \lambda_{i}(M^{\ast})^{-1} [P^{\top} (\A(x^{\ast})\widehat{x}) P]_{ij} \quad \forall (i,j) \in \alpha \times \gamma. \label{eq:ZAblock}
\end{gather}
By exploiting the well-known property regarding $\P_{\S_{+}^{|\beta|}}$, it can be easily verified that $\langle P_{\beta}^{\top} \widehat{N} P_{\beta} - \P_{\S_{+}^{|\beta|}} (P_{\beta}^{\top} \widehat{N} P_{\beta}), S - \P_{\S_{+}^{|\beta|}} (P_{\beta}^{\top} \widehat{N} P_{\beta}) \rangle \leq 0$ for every $S \in \S_{+}^{|\beta|}$. Therefore, substituting $S = O$ and $S = 2\P_{\S_{+}^{|\beta|}} (P_{\beta}^{\top} \widehat{N} P_{\beta})$ into the inequality and using~\eqref{eq:APZ} yield ${\rm tr}[P_{\beta}^{\top} (\A(x^{\ast})\widehat{x}) P_{\beta} P_{\beta}^{\top} \widehat{Z} P_{\beta}] = \langle P_{\beta}^{\top} (\A(x^{\ast})\widehat{x}) P_{\beta}, P_{\beta}^{\top} \widehat{Z} P_{\beta} \rangle = 0$. It then follows from~\eqref{eq:AZblock} and~\eqref{eq:ZAblock} that 
\begin{align}
\begin{aligned} \label{eq:AZtrace}
-\langle \A(x^{\ast}) \widehat{x}, \widehat{Z} \rangle 
&= -2{\rm tr}[P_{\alpha}^{\top} (\A(x^{\ast})\widehat{x}) P_{\gamma} P_{\gamma}^{\top} \widehat{Z} P_{\alpha}] 
\\
& \qquad \qquad \qquad \qquad - {\rm tr}[P_{\beta}^{\top} (\A(x^{\ast})\widehat{x}) P_{\beta} P_{\beta}^{\top} \widehat{Z} P_{\beta}]
\\
&= -2 \sum_{i \in \alpha} \sum_{j \in \gamma} [P^{\top}(\A(x^{\ast})\widehat{x})P]_{ij} [P^{\top} \widehat{Z} P]_{ij}
\\
&= -2 \sum_{i \in \alpha} \sum_{j \in \gamma} \lambda_{j}(M^{\ast}) \lambda_{i}(M^{\ast})^{-1} [P^{\top}(\A(x^{\ast})\widehat{x})P]_{ij}^{2}.
\end{aligned}
\end{align}
Meanwhile, noting \eqref{diagonalize:matXZ} derives that $\lambda_{i}(M^{\ast}) = [D]_{ii}$ for $i \in \alpha$ and $\lambda_{i}(M^{\ast}) = -[\Lambda]_{ii}$ for $i \in \gamma$. Then, by the definition of ${\cal H}(x^{\ast}, Z^{\ast})$, we have
\begin{align}
\begin{aligned} \label{eq:HxZ}
\langle {\cal H}(x^{\ast}, Z^{\ast}) \widehat{x}, \widehat{x} \rangle
&= 2{\rm tr}[ \Lambda P^{\top} (\A(x^{\ast})\widehat{x}) P D^{\dagger} P^{\top} (\A(x^{\ast})\widehat{x}) P]
\\
&= 2{\rm tr}[ \Lambda_{\gamma} P_{\gamma}^{\top}(\A(x^{\ast})\widehat{x}) P_{\alpha} D_{\alpha}^{-1} P_{\alpha}^{\top} (\A(x^{\ast})\widehat{x}) P_{\gamma} ]
\\
&= -2 \sum_{i \in \alpha} \sum_{j \in \gamma} \lambda_{j}(M^{\ast}) \lambda_{i}(M^{\ast})^{-1} [P^{\top}(\A(x^{\ast})\widehat{x})P]_{ij}^{2}.
\end{aligned}
\end{align}
Combining~\eqref{eq:AZtrace} and ~\eqref{eq:HxZ} implies that~\eqref{eq:AZHx} holds. Note that $\nabla_{xx}^{2} L(v^{\ast}) \widehat{x} - \nabla g(x^{\ast}) \widehat{y} - \A^{\ast}(x^{\ast}) \widehat{Z} = 0$ and $\nabla g(x^{\ast})^{\top} \widehat{x} = 0$ are assumed. It then follows from~\eqref{eq:AZHx} that
\begin{align}
\begin{aligned} \label{ineq:nabLH}
0
&= \langle \nabla_{xx}^{2} L(v^{\ast}) \widehat{x} - \nabla g(x^{\ast}) \widehat{y} - \A^{\ast}(x^{\ast}) \widehat{Z}, \widehat{x} \rangle
\\
&= \langle \nabla_{xx}^{2} L(v^{\ast}) \widehat{x}, \widehat{x} \rangle - \langle \A(x^{\ast}) \widehat{x}, \widehat{Z} \rangle
\\
&= \langle ( \nabla_{xx}^{2} L(v^{\ast}) + {\cal H}(x^{\ast}, Z^{\ast}) ) \widehat{x}, \widehat{x} \rangle.
\end{aligned}
\end{align}
Equalities $\nabla_{x} L(v^{\ast}) = 0$ and $\nabla g(x^{\ast})^{\top} \widehat{x} = 0$ yield 
$\nabla f(x^{\ast})^{\top} \widehat{x} = \langle \nabla g(x^{\ast})^{\top} \widehat{x}, y^{\ast} \rangle + \langle \A(x^{\ast}) \widehat{x}, Z^{\ast} \rangle = {\rm tr}[P^{\top} (\A(x^{\ast}) \widehat{x}) P \Lambda] = {\rm tr}[P_{\gamma}^{\top} (\A(x^{\ast}) \widehat{x}) P_{\gamma} \Lambda_{\gamma}]$. Because the second equality of \eqref{eq:AZblock} derives ${\rm tr}[P_{\gamma}^{\top} (\A(x^{\ast}) \widehat{x}) P_{\gamma} \Lambda_{\gamma}] = 0$, it is clear that $\nabla f(x^{\ast})^{\top} \widehat{x} = 0$. Moreover, thanks to~\eqref{property:TSX} and~\eqref{eq:APZ}, it can be verified that $\A(x^{\ast}) \widehat{x} \in \T_{\S_{+}}(X(x^{\ast}))$. Thus, the definition of the critical cone $C(x^{\ast})$ implies $\widehat{x} \in C(x^{\ast})$. It then follows from (A2) and \eqref{ineq:nabLH} that
\begin{align}
\widehat{x} = 0, \label{eq:xi0}
\end{align}
and hence we readily obtain
\begin{align}
& \nabla g(x^{\ast}) \widehat{y} + \A^{\ast}(x^{\ast}) \widehat{Z} = - \nabla_{xx}^{2} L(v^{\ast}) \widehat{x} + \nabla g(x^{\ast}) \widehat{y} + \A^{\ast}(x^{\ast}) \widehat{Z} = 0,  \label{eq:gA}
\\
& {\cal P}_{\S_{+}^{d}}^{\prime}(M^{\ast}; -\widehat{Z}) = {\cal P}_{\S_{+}^{d}}^{\prime}(X(x^{\ast}) - Z^{\ast}; \A(x^{\ast}) \widehat{x} - \widehat{Z}) - \A(x^{\ast}) \widehat{x} = O.  \label{eq:VS}
\end{align}
Since (A1) is satisfied, there exist $\widehat{\eta} \in \R^{n}$ and $\widehat{\Gamma} \in \T_{\S_{+}^{d}}(X(x^{\ast})) \cap (Z^{\ast})^{\perp}$ such that
\begin{align}
\nabla g(x^{\ast})^{\top} \widehat{\eta} = -\widehat{y}, \quad \A(x^{\ast}) \widehat{\eta} + \widehat{\Gamma} = -\widehat{Z}. \label{eq:gA2}
\end{align}
Noting~\eqref{property:TSX} and $\widehat{\Gamma} \in \T_{\S_{+}^{d}}(X(x^{\ast})) \cap (Z^{\ast})^{\perp}$ yields
\begin{align*}
\left[
\begin{array}{cc}
P_{\beta}^{\top} \widehat{\Gamma} P_{\beta} & P_{\beta}^{\top} \widehat{\Gamma} P_{\gamma}
\\
P_{\gamma}^{\top} \widehat{\Gamma} P_{\beta} & P_{\gamma}^{\top} \widehat{\Gamma} P_{\gamma}
\end{array}
\right] \succeq O, \quad 0 = \langle \widehat{\Gamma}, Z^{\ast} \rangle = {\rm tr}[ P_{\gamma}^{\top} \widehat{\Gamma} P_{\gamma} \Lambda_{\gamma} ].
\end{align*}
These facts and $\Lambda_{\gamma} \succ O$ ensure $P_{\beta}^{\top} \widehat{\Gamma} P_{\beta} \succeq O$, $P_{\beta}^{\top} \widehat{\Gamma} P_{\gamma} = O$, and $P_{\gamma}^{\top} \widehat{\Gamma} P_{\gamma} = O$. On the other hand, combining \cite[Equation~(16)]{Su06} and \eqref{eq:VS} implies $P_{\alpha}^{\top} \widehat{Z} P_{\alpha} = O$, $P_{\alpha}^{\top} \widehat{Z} P_{\beta} = O$, $P_{\alpha}^{\top} \widehat{Z} P_{\gamma} = O$, and $P_{\beta}^{\top} \widehat{Z} P_{\beta} \succeq O$, where the last inequality is derived from $\P_{\S_{+}}^{|\beta|}(-P_{\beta}^{\top} \widehat{Z} P_{\beta}) = O$. It then follows from \eqref{eq:gA} and \eqref{eq:gA2} that $\Vert \widehat{y} \Vert^{2} + \Vert \widehat{Z} \Vert_{{\rm F}}^{2} = -\langle \nabla g(x^{\ast}) \widehat{y} + \A^{\ast}(x^{\ast}) \widehat{Z}, \widehat{\eta} \rangle - \langle \widehat{Z}, \widehat{\Gamma} \rangle = - {\rm tr}[ P_{\beta}^{\top} \widehat{Z} P_{\beta} P_{\beta}^{\top} \widehat{\Gamma} P_{\beta} ] 
\leq 0$.
This result and \eqref{eq:xi0} mean $(\widehat{x}, \widehat{y}, \widehat{Z}) = (0,0,O)$.
\qed
\end{proof}

\begin{theorem} \label{th:error_bound}
If~{\rm (A1)} and {\rm (A2)} hold, then there exist $r > 0$ and $c > 0$ such that $\Vert v - v^{\ast} \Vert \leq c \sigma(v)$ for all $v \in B(v^{\ast}, r)$.
\end{theorem}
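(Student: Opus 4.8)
The plan is to prove this error bound by a blow-up (rescaling) argument that reduces the claim to Lemma~\ref{lemma:base}. Suppose, for contradiction, that no such $r$ and $c$ exist. Then for every $k \in \mathbb{N}$ there is a point $v_{k} = (x_{k}, y_{k}, Z_{k}) \in B(v^{\ast}, 1/k)$ with $\Vert v_{k} - v^{\ast} \Vert > k\,\sigma(v_{k})$; in particular $v_{k} \neq v^{\ast}$, $v_{k} \to v^{\ast}$, and $\sigma(v_{k})/\Vert v_{k} - v^{\ast} \Vert \to 0$. Put $t_{k} \coloneqq \Vert v_{k} - v^{\ast} \Vert > 0$ and $d_{k} \coloneqq (v_{k} - v^{\ast})/t_{k}$, so that $\Vert d_{k} \Vert = 1$; since $\V$ is finite dimensional, along a subsequence $d_{k} \to d = (\widehat{x}, \widehat{y}, \widehat{Z})$ with $\Vert d \Vert = 1$.

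Next I would Taylor-expand the three terms defining $\sigma$ around $v^{\ast}$. Because $f$, $g$, $X$ are twice continuously differentiable, the map $v \mapsto \nabla_{x} L(v)$ is continuously differentiable and $\nabla_{x} L(v^{\ast}) = 0$, hence
\[
\nabla_{x} L(v_{k}) = \nabla_{xx}^{2} L(v^{\ast})(x_{k} - x^{\ast}) - \nabla g(x^{\ast})(y_{k} - y^{\ast}) - \A^{\ast}(x^{\ast})(Z_{k} - Z^{\ast}) + o(t_{k}).
\]
Similarly, $g(x^{\ast}) = 0$ gives $g(x_{k}) = \nabla g(x^{\ast})^{\top}(x_{k} - x^{\ast}) + o(t_{k})$, and $X(x_{k}) = X(x^{\ast}) + \A(x^{\ast})(x_{k} - x^{\ast}) + o(t_{k})$, so that $X(x_{k}) - Z_{k} = M^{\ast} + t_{k} h_{k} + o(t_{k})$ with $h_{k} \coloneqq [\A(x^{\ast})(x_{k} - x^{\ast}) - (Z_{k} - Z^{\ast})]/t_{k} \to \A(x^{\ast})\widehat{x} - \widehat{Z}$. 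The metric projection $\P_{\S_{+}^{d}}$ is globally Lipschitz with modulus $1$ and directionally differentiable at $M^{\ast}$ (see~\cite{Su06}), hence semidifferentiable there with a Lipschitz directional derivative; therefore $\P_{\S_{+}^{d}}(M^{\ast} + t_{k} h_{k}) = \P_{\S_{+}^{d}}(M^{\ast}) + t_{k}\,\P_{\S_{+}^{d}}^{\prime}(M^{\ast}; \A(x^{\ast})\widehat{x} - \widehat{Z}) + o(t_{k})$. Combining this with $X(x^{\ast}) = \P_{\S_{+}^{d}}(M^{\ast})$ and the expansion of $X(x_{k})$ yields
\[
X(x_{k}) - \P_{\S_{+}^{d}}(X(x_{k}) - Z_{k}) = t_{k}\big(\A(x^{\ast})\widehat{x} - \P_{\S_{+}^{d}}^{\prime}(M^{\ast}; \A(x^{\ast})\widehat{x} - \widehat{Z})\big) + o(t_{k}).
\]

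Then I would divide each of the three expansions by $t_{k}$ and let $k \to \infty$. Since each summand of $\sigma$ is nonnegative and $\sigma(v_{k})/t_{k} \to 0$, every summand divided by $t_{k}$ tends to $0$; together with $(x_{k} - x^{\ast})/t_{k} \to \widehat{x}$, $(y_{k} - y^{\ast})/t_{k} \to \widehat{y}$, and $(Z_{k} - Z^{\ast})/t_{k} \to \widehat{Z}$, this forces
\[
\nabla_{xx}^{2} L(v^{\ast})\widehat{x} - \nabla g(x^{\ast})\widehat{y} - \A^{\ast}(x^{\ast})\widehat{Z} = 0, \qquad \nabla g(x^{\ast})^{\top}\widehat{x} = 0,
\]
and $\A(x^{\ast})\widehat{x} - \P_{\S_{+}^{d}}^{\prime}(M^{\ast}; \A(x^{\ast})\widehat{x} - \widehat{Z}) = O$. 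These are precisely the hypotheses of Lemma~\ref{lemma:base}, so $(\widehat{x}, \widehat{y}, \widehat{Z}) = (0, 0, O)$, contradicting $\Vert d \Vert = 1$. Hence $r$ and $c$ with the asserted property exist. The step I expect to be the main obstacle is the treatment of the nonsmooth projection term: one has to invoke exactly the right regularity of $\P_{\S_{+}^{d}}$ at $M^{\ast}$ — global Lipschitz continuity plus directional differentiability, giving a uniform first-order (B-differentiable) expansion whose remainder is $o(t_{k})$ uniformly along the converging sequence of directions $h_{k}$ — and then match the resulting limit equation with the linearized complementarity relation appearing in Lemma~\ref{lemma:base}. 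Everything else (the Taylor expansions of the smooth data, the rescaling, the extraction of a convergent subsequence) is routine. I note that (A3) plays no role here; only (A1) and (A2) are used, the former through Lemma~\ref{lemma:base} and the latter via the SOSC invoked inside that lemma.
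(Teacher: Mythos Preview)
Your proposal is correct and follows essentially the same route as the paper: a contradiction argument with rescaling $d_{k}=(v_{k}-v^{\ast})/t_{k}$, Taylor expansion of the smooth data, a first-order expansion of $\P_{\S_{+}^{d}}$ at $M^{\ast}$ (the paper invokes Rademacher's theorem and decomposes the projection error into pieces $R_{j},S_{j},T_{j},U_{j}$, whereas you compress this into a single semidifferentiability claim with Lipschitz directional derivative), and finally an application of Lemma~\ref{lemma:base} to force the limit direction to vanish. Your identification of the delicate point---the uniform $o(t_{k})$ remainder for the projection along the converging directions $h_{k}$---matches exactly where the paper spends its effort, and your observation that (A3) is not used here is also accurate.
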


\begin{proof}
We prove this theorem by contradiction. Then, we can take some sequence $\{ \widetilde{v}_{j} \coloneqq (\widetilde{x}_{j}, \widetilde{y}_{j}, \widetilde{Z}_{j}) \} \subset \V$ satisfying
\begin{align}
\textstyle \widetilde{v}_{j} \in B(v^{\ast}, \frac{1}{j}), \quad j \sigma(\widetilde{v}_{j}) < \Vert \widetilde{v}_{j} - v^{\ast} \Vert \quad \forall j \in \mathbb{N}. \label{ineq:contradiction}
\end{align}
Let $j \in \mathbb{N}$ be arbitrary and let $\tau_{j} \coloneqq \Vert \widetilde{v}_{j} - v^{\ast} \Vert$ and $\widetilde{\sigma}_{j} \coloneqq \sigma(\widetilde{v}_{j})$ for simplicity. Notice that $\tau_{j} > 0$ because the inequality of~\eqref{ineq:contradiction} does not hold when $\tau_{j} = 0$. We denote $\widehat{x}_{j} \coloneqq \frac{1}{\tau_{j}} (\widetilde{x}_{j} - x^{\ast})$, $\widehat{y}_{j} \coloneqq \frac{1}{\tau_{j}} (\widetilde{y}_{j} - y^{\ast})$, $\widehat{Z}_{j} \coloneqq \frac{1}{\tau_{j}} (\widetilde{Z}_{j} - Z^{\ast})$, and $\widehat{v}_{j} \coloneqq (\widehat{x}_{j}, \widehat{y}_{j}, \widehat{Z}_{j})$. The definition of $\tau_{j}$ implies 
\begin{align}
\max \{ \Vert \widehat{x}_{j} \Vert, \Vert \widehat{Z}_{j} \Vert_{{\rm F}} \} \leq \left\Vert \widehat{v}_{j} \right\Vert = 1. \label{ineq:xi_sigma}
\end{align}
Hence, without loss of generality, there exists $\widehat{v} \coloneqq (\widehat{x}, \widehat{y}, \widehat{Z}) \in \V$ such that
\begin{align}
\lim_{j \to \infty} \widehat{v}_{j} = \widehat{v}, \quad \widehat{v} \not= 0.  \label{noteq:hatv}
\end{align}
We take $\varepsilon > 0$ arbitrarily. From $\nabla_{x} L(v^{\ast}) = 0$, $g(x^{\ast}) = 0$, and the Taylor expansion, there exists $r_{1} > 0$ such that for all $v = (x, y, Z) \in B(v^{\ast}, r_{1})$,
\begin{align}
&\begin{aligned} \label{ineq:nabL_taylor}
& \Vert \nabla_{x} L(v) - \nabla_{xx}^{2} L(v^{\ast})(x-x^{\ast}) + \nabla g(x^{\ast})(y - y^{\ast}) + \A^{\ast}(x^{\ast})(Z - Z^{\ast}) \Vert 
\\
& \leq \frac{\varepsilon}{2} \Vert v - v^{\ast} \Vert,
\end{aligned}
\\
& \Vert g(x) - \nabla g(x^{\ast})^{\top} (x - x^{\ast}) \Vert \leq \frac{\varepsilon}{2} \Vert x - x^{\ast} \Vert, \label{ineq:eqg_taylor}
\\
& \Vert X(x) - X(x^{\ast}) - \A(x^{\ast}) (x - x^{\ast}) \Vert _{{\rm F}} \leq \frac{\varepsilon}{4} \Vert x - x^{\ast} \Vert. \label{ineq:funcX_taylor}
\end{align}
Let us define $\widehat{Q} \coloneqq \A(x^{\ast}) \widehat{x} - \widehat{Z}$. It follows from Rademacher's theorem~\cite[Section~9.J]{RoWe98} that for $\widehat{Q} \in \S^{d}$, there exists $r_{2} > 0$ such that for all $t \in (0, r_{2}]$,
\begin{align} \label{ineq:semismooth_P}
\Vert \P_{\S_{+}^{d}}(M^{\ast} + t\widehat{Q}) - \P_{\S_{+}^{d}}(M^{\ast}) - t\P_{\S_{+}^{d}}^{\prime}(M^{\ast}; \widehat{Q}) \Vert_{{\rm F}} \leq \frac{\varepsilon}{4} t. 
\end{align}
We have from \eqref{ineq:contradiction} that $\widetilde{v}_{j} \to v^{\ast}$ and $\frac{\widetilde{\sigma}_{j}}{\tau_{j}} \to 0$ as $j \to \infty$, namely, there exists $j_{1} \in \mathbb{N}$ such that
\begin{align}
\tau_{j} = \Vert \widetilde{v}_{j} - v^{\ast} \Vert \leq \min \{ r_{1}, r_{2} \}, \quad \frac{\widetilde{\sigma}_{j}}{\tau_{j}} \leq \frac{\varepsilon}{4} \quad \forall j \geq j_{1}. \label{eq:tauzero}
\end{align}
Let $j \in \mathbb{N}$ be an arbitrary integer satisfying $j \geq j_{1}$. By using \eqref{ineq:nabL_taylor}, \eqref{eq:tauzero}, and $\Vert \nabla_{x} L(\widetilde{v}_{j}) \Vert \leq \widetilde{\sigma}_{j}$, we get $\Vert \nabla_{xx}^{2} L(v^{\ast}) (\widetilde{x}_{j} - x^{\ast}) - \nabla g(x^{\ast}) (\widetilde{y}_{j} - y^{\ast}) - \A^{\ast}(x^{\ast}) (\widetilde{Z}_{j} - Z^{\ast}) \Vert \leq \frac{\varepsilon}{2} \tau_{j} + \frac{\varepsilon}{4} \tau_{j} < \varepsilon \tau_{j}$. Meanwhile, combining \eqref{ineq:eqg_taylor}, \eqref{eq:tauzero}, and $\Vert g(\widetilde{x}_{j}) \Vert \leq \widetilde{\sigma}_{j}$ implies $\Vert \nabla g(x^{\ast})^{\top}(\widetilde{x}_{j} - x^{\ast}) \Vert \leq \frac{\varepsilon}{2} \tau_{j} + \frac{\varepsilon}{4} \tau_{j} < \varepsilon \tau_{j}$. Dividing both sides of them by $\tau_{j}$ and using~\eqref{noteq:hatv} lead to
\begin{align}
\Vert \nabla_{xx}^{2} L(v^{\ast}) \widehat{x} - \nabla g(x^{\ast}) \widehat{y} - \A^{\ast}(x^{\ast}) \widehat{Z} \Vert \leq \varepsilon, \quad \Vert \nabla g(x^{\ast})^{\top} \widehat{x} \Vert \leq \varepsilon. \label{ineq:eqLgeps}
\end{align}
We now define $R_{j}$, $S_{j}$, $T_{j}$, and $U_{j}$ as follows. $R_{j} \coloneqq - X(\widetilde{x}_{j}) + X(x^{\ast}) + \A(x^{\ast}) (\widetilde{x}_{j} - x^{\ast})$, $S_{j} \coloneqq \P_{\S_{+}^{d}}(M^{\ast} + \tau_{j} \widehat{Q}) - \P_{\S_{+}^{d}}(M^{\ast}) - \tau_{j} \P_{\S_{+}^{d}}^{\prime}(M^{\ast}; \widehat{Q})$, $T_{j} \coloneqq X(\widetilde{x}_{j}) - P_{\S_{+}^{d}}(X(\widetilde{x}_{j}) - \widetilde{Z}_{j})$, and $U_{j} \coloneqq P_{\S_{+}^{d}}(X(\widetilde{x}_{j}) - \widetilde{Z}_{j}) - \P_{\S_{+}^{d}}(M^{\ast} + \tau_{j} \widehat{Q})$. Since $\A(x^{\ast}) (\widetilde{x}_{j} - x^{\ast}) - \tau_{j} \P_{\S_{+}^{d}}^{\prime}(M^{\ast}; \widehat{Q}) = \A(x^{\ast}) (\widetilde{x}_{j} - x^{\ast}) - \tau_{j} \P_{\S_{+}^{d}}^{\prime}(M^{\ast}; \widehat{Q}) + X(x^{\ast}) - \P_{\S_{+}^{d}} (M^{\ast}) = R_{j} + S_{j} + T_{j} + U_{j}$, we can easily verified that
\begin{align}
\A(x^{\ast}) \widehat{x}_{j} - \P_{\S_{+}^{d}}^{\prime}(M^{\ast}; \widehat{Q}) = \frac{1}{\tau_{j}} (R_{j} + S_{j} + T_{j} + U_{j}). \label{eq:RSTU}
\end{align}
Exploiting~\eqref{ineq:funcX_taylor}, \eqref{ineq:semismooth_P}, \eqref{eq:tauzero}, $\Vert X(\widetilde{x}_{j}) - P_{\S_{+}^{d}}(X(\widetilde{x}_{j}) - \widetilde{Z}_{j}) \Vert_{{\rm F}} \leq \widetilde{\sigma}_{j}$, and $\Vert \widetilde{x}_{j} - x^{\ast} \Vert \leq \tau_{j}$ yields that
\begin{align*}
\frac{1}{\tau_{j}} \Vert R_{j} \Vert_{{\rm F}} \leq \frac{\varepsilon}{4}, \quad \frac{1}{\tau_{j}} \Vert S_{j} \Vert_{{\rm F}} \leq \frac{\varepsilon}{4}, \quad \frac{1}{\tau_{j}} \Vert T_{j} \Vert_{{\rm F}} \leq \frac{\varepsilon}{4},
\end{align*}
and
\begin{align*}
\frac{1}{\tau_{j}} \Vert U_{j} \Vert_{{\rm F}} 
&\leq \frac{1}{\tau_{j}} \Vert X(\widetilde{x}_{j}) - X(x^{\ast}) - \widetilde{Z}_{j} + Z^{\ast} - \tau_{j} \widehat{Q} \Vert_{{\rm F}}
\\
&\leq \frac{1}{\tau_{j}} \Vert R_{j} \Vert_{{\rm F}} + \Vert \A(x^{\ast}) (\widehat{x}_{j} - \widehat{x}) \Vert_{{\rm F}} + \Vert \widehat{Z}_{j} - \widehat{Z} \Vert_{{\rm F}}
\\
&\leq \frac{\varepsilon}{4} + \Vert \A(x^{\ast}) (\widehat{x}_{j} - \widehat{x}) \Vert_{{\rm F}} + \Vert \widehat{Z}_{j} - \widehat{Z} \Vert_{{\rm F}}.
\end{align*}
These inequalities, \eqref{noteq:hatv}, and \eqref{eq:RSTU} derives
\begin{align}
\Vert \A(x^{\ast}) \widehat{x} - \P_{\S_{+}^{d}}^{\prime}(M^{\ast}; \widehat{Q}) \Vert_{{\rm F}} \leq \varepsilon. \label{ineq:AxZeps}
\end{align}
Since $\varepsilon > 0$ is arbitrary, it follows from~\eqref{ineq:eqLgeps} and \eqref{ineq:AxZeps} that $\nabla_{xx}^{2} L(v^{\ast}) \widehat{x} - \nabla g(x^{\ast}) \widehat{y} - \A^{\ast}(x^{\ast}) \widehat{Z} = 0$, $\nabla g(x^{\ast})^{\top} \widehat{x} = 0$, and $\A(x^{\ast}) \widehat{x} - \P_{\S_{+}^{d}}^{\prime}(X(x^{\ast}) - Z^{\ast}; \A(x^{\ast}) \widehat{x} - \widehat{Z}) = O$. Then, using Lemma~\ref{lemma:base} leads to $\widehat{v} = 0$. However, this contradicts to \eqref{noteq:hatv}. Therefore, the assertion is proven.
\qed
\end{proof}

The function $\sigma$ satisfies not only the error bound condition but also the locally Lipschitz continuity of $\sigma$ at $v^{\ast}$. The fact is shown below.

\begin{lemma} \label{lem:sigma_Lipschitz}
There exist $r > 0$ and $c > 0$ such that $\sigma(v) \leq c \Vert v - v^{\ast} \Vert$ for all $v \in B(v^{\ast}, r)$.
\end{lemma}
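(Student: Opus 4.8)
The plan is to establish the three summands of $\sigma(v)$ as locally Lipschitz functions of $v$ near $v^{\ast}$ and then add the resulting estimates. Recall that
\[
\sigma(v) = \Vert \nabla_{x} L(v) \Vert + \Vert g(x) \Vert + \Vert X(x) - \P_{\S_{+}^{d}}(X(x) - Z) \Vert_{{\rm F}},
\]
and that $\sigma(v^{\ast}) = 0$ because $v^{\ast}$ is a KKT point. So it suffices to bound each term by a constant multiple of $\Vert v - v^{\ast} \Vert$ on some ball $B(v^{\ast}, r)$.

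First I would handle the first two terms. Since $f$, $g$, $X$ are twice continuously differentiable (indeed (A3) gives local Lipschitz continuity of the second derivatives, but $C^{1}$ of the first derivatives is enough here), the map $v \mapsto \nabla_{x} L(v) = \nabla f(x) - \nabla g(x) y - \A^{\ast}(x) Z$ is continuously differentiable, hence locally Lipschitz near $v^{\ast}$; combined with $\nabla_{x} L(v^{\ast}) = 0$ this gives $\Vert \nabla_{x} L(v) \Vert \le c_{1} \Vert v - v^{\ast} \Vert$ on a ball. Similarly $x \mapsto g(x)$ is $C^{1}$ and $g(x^{\ast}) = 0$, so $\Vert g(x) \Vert \le c_{2} \Vert x - x^{\ast} \Vert \le c_{2} \Vert v - v^{\ast} \Vert$ on a (possibly smaller) ball. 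These are completely routine mean-value-theorem estimates on a compact ball where the relevant Jacobians are bounded.

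For the third term I would combine two Lipschitz facts: $x \mapsto X(x)$ is $C^{1}$, hence locally Lipschitz, so $(x, Z) \mapsto X(x) - Z$ is locally Lipschitz; and the metric projection $\P_{\S_{+}^{d}}$ onto the closed convex cone $\S_{+}^{d}$ is globally nonexpansive (1-Lipschitz) in the Frobenius norm. Writing $\Phi(v) \coloneqq X(x) - \P_{\S_{+}^{d}}(X(x) - Z)$, the difference $\Phi(v) - \Phi(v^{\ast})$ is controlled by $\Vert X(x) - X(x^{\ast}) \Vert_{{\rm F}} + \Vert \P_{\S_{+}^{d}}(X(x) - Z) - \P_{\S_{+}^{d}}(X(x^{\ast}) - Z^{\ast}) \Vert_{{\rm F}}$, and the second piece is at most $\Vert (X(x) - Z) - (X(x^{\ast}) - Z^{\ast}) \Vert_{{\rm F}} \le \Vert X(x) - X(x^{\ast}) \Vert_{{\rm F}} + \Vert Z - Z^{\ast} \Vert_{{\rm F}}$ by nonexpansiveness. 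Since $\Phi(v^{\ast}) = X(x^{\ast}) - \P_{\S_{+}^{d}}(M^{\ast}) = X(x^{\ast}) - X(x^{\ast}) = O$ (from the KKT conditions, $\P_{\S_{+}^{d}}(X(x^{\ast}) - Z^{\ast}) = X(x^{\ast})$), this bounds the third term by $c_{3}\Vert v - v^{\ast}\Vert$ on a ball. Taking $r$ to be the minimum of the three radii and $c = c_{1} + c_{2} + c_{3}$ finishes the proof.

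I do not expect any serious obstacle here; the only point that needs a moment's care is the nonexpansiveness of the metric projection onto $\S_{+}^{d}$, but this is a standard property of projections onto closed convex sets in a Hilbert space (here $\S^{d}$ with the Frobenius inner product) and was already implicitly used in the proof of Theorem~\ref{th:error_bound} via the estimate $\Vert X(\widetilde{x}_{j}) - P_{\S_{+}^{d}}(X(\widetilde{x}_{j}) - \widetilde{Z}_{j}) \Vert_{{\rm F}} \le \widetilde{\sigma}_{j}$. Everything else is a finite composition of $C^{1}$ maps and a nonexpansive map on a compact neighborhood, which is automatically Lipschitz.
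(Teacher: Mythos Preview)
Your proposal is correct and follows essentially the same argument as the paper: both proofs use the local Lipschitz continuity of $\nabla_{x}L$, $g$, and $X$ (which follows from $C^{2}$-smoothness), the fact that $\sigma(v^{\ast})=0$, and the nonexpansiveness of $\P_{\S_{+}^{d}}$ to control the third summand by $2\Vert X(x)-X(x^{\ast})\Vert_{{\rm F}}+\Vert Z-Z^{\ast}\Vert_{{\rm F}}$. The paper even arrives at the explicit constant $c=c_{1}+3c_{2}+1$ via exactly your decomposition; the only (harmless) inaccuracy is your aside that nonexpansiveness was ``implicitly used'' in Theorem~\ref{th:error_bound} through the bound $\Vert X(\widetilde{x}_{j}) - \P_{\S_{+}^{d}}(X(\widetilde{x}_{j}) - \widetilde{Z}_{j}) \Vert_{{\rm F}} \le \widetilde{\sigma}_{j}$, which actually comes straight from the definition of $\sigma$.
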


\begin{proof}
Note that $f$, $g$, and $X$ are twice continuously differentiable on $\R^{n}$. It then follows from the definition of $L$ that $\nabla_{x} L$ is locally Lipschitz continuous at $v^{\ast}$, and hence there exist $r_{1} > 0$ and $c_{1} > 0$ such that for all $v \in B(v^{\ast}, r_{1})$,
\begin{align}
\Vert \nabla_{x} L(v) - \nabla_{x} L (v^{\ast}) \Vert \leq c_{1} \Vert v - v^{\ast} \Vert. \label{ineq:LLip}
\end{align}
Moreover, since the functions $g$ and $X$ are also locally Lipschitz continuous at $x^{\ast}$, there exist $r_{2} > 0$ and $c_{2} > 0$ such that for each $x \in B(x^{\ast}, r_{2})$,
\begin{align}
\Vert g(x) - g(x^{\ast}) \Vert \leq c_{2} \Vert x - x^{\ast} \Vert, \quad \Vert X(x) - X(x^{\ast}) \Vert_{{\rm F}} \leq c_{2} \Vert x - x^{\ast} \Vert. \label{ineq:gXLip}
\end{align}
Now we define $r \coloneqq \min \{ r_{1}, r_{2} \}$ and $c: = c_{1} + 3c_{2} + 1$. By \eqref{ineq:LLip} and \eqref{ineq:gXLip}, we can easily see that for $v = (x, y, Z) \in B(v^{\ast}, r)$,
\begin{align*}
\sigma(v)
&= \Vert \nabla_{x} L(v) - \nabla_{x} L(v^{\ast}) \Vert + \Vert g(x) - g(x^{\ast}) \Vert
\\
& \hspace{10mm} + \Vert X(x) - \P_{\S_{+}^{d}}(X(x) - Z) - X(x^{\ast}) + \P_{\S_{+}^{d}}(X(x^{\ast}) - Z^{\ast}) \Vert_{{\rm F}}
\\
&\leq c_{1} \Vert v - v^{\ast} \Vert + c_{2} \Vert x - x^{\ast} \Vert
\\
& \hspace{10mm} + \Vert X(x) - X(x^{\ast}) \Vert_{{\rm F}} + \Vert \P_{\S_{+}^{d}}(X(x) - Z) - \P_{\S_{+}^{d}}(X(x^{\ast}) - Z^{\ast}) \Vert_{{\rm F}}
\\
&\leq c_{1} \Vert v - v^{\ast} \Vert + c_{2} \Vert x - x^{\ast} \Vert + 2 \Vert X(x) - X(x^{\ast}) \Vert_{{\rm F}} + \Vert Z - Z^{\ast} \Vert_{{\rm F}}
\\
&\leq c_{1} \Vert v - v^{\ast} \Vert + 3c_{2} \Vert x - x^{\ast} \Vert + \Vert Z - Z^{\ast} \Vert_{{\rm F}}
\\
&\leq c \Vert v - v^{\ast} \Vert,
\end{align*}
where the first equality is derived from $\sigma(v^{\ast}) = 0$ and the second inequality follows from the non-expansion property of $\P_{\S_{+}^{d}}$. This completes the proof.
\qed
\end{proof}

Algorithm~\ref{Local_SQSDP} repeatedly solves the following subproblem for each iteration point $v = (x, y, Z) \in \V$:
\begin{align*}
\mbox{P$(v)$} \quad
\begin{aligned}
& \mini_{(\xi, \zeta, \Sigma) \in \V} & & \langle \nabla f(x), \xi \rangle + \frac{1}{2} \langle H(v) \xi, \xi \rangle + \frac{\sigma(v)}{2} \Vert \zeta \Vert^{2} + \frac{\sigma(v)}{2} \Vert \Sigma \Vert_{{\rm F}}^{2}
\\
& \subj & & g(x) + \nabla g(x)^{\top} \xi + \sigma(v) (\zeta - y) = 0,
\\
& & & X(x) + \A(x) \xi + \sigma(v) (\Sigma - Z) \succeq O.
\end{aligned}
\end{align*}
We provide a property regarding the solvability of P$(v)$. To this end, we consider the following problem.
\begin{align*}
\mbox{P$(v^{\ast})$} \quad
\begin{aligned}
& \mini_{\xi \in \R^{n}} & & \langle \nabla f(x^{\ast}), \xi \rangle + \frac{1}{2} \langle \nabla_{xx}^{2}L(v^{\ast}) \xi, \xi \rangle
\\
& \subj & & g(x^{\ast}) + \nabla g(x^{\ast})^{\top} \xi = 0, 
\\
& & & X(x^{\ast}) + \A(x^{\ast}) \xi \succeq O.
\end{aligned}
\end{align*}
Notice that $\omega^{\ast} \coloneqq (0, y^{\ast}, Z^{\ast}) \in \V$ satisfies the KKT conditions of P$(v^{\ast})$. Moreover, if (A2) holds, then the KKT point $\omega^{\ast}$ satisfies the SOSC of P$(v^{\ast})$, i.e., there exists $\nu > 0$ such that
\begin{align}
0 < \langle \nabla f(x^{\ast}), \xi \rangle + \frac{1}{2} \langle \nabla_{xx}^{2}L(v^{\ast}) \xi, \xi \rangle \quad \forall \xi \in \Gamma \backslash \{ 0 \}, \label{ineq:SOSC_Past}
\end{align}
where $\Gamma \coloneqq \{ \xi \in \R^{n}; g(x^{\ast}) + \nabla g(x^{\ast})^{\top} \xi = 0, X(x^{\ast}) + \A(x^{\ast}) \xi \succeq O, \Vert \xi \Vert \leq \nu \}$. By using the positive constant $\nu$, we further consider the following problem.
\begin{align*}
&\mbox{Q$(v)$} \quad
\begin{aligned}
& \mini_{(\xi, \zeta, \Sigma) \in \V} & & \langle \nabla f(x), \xi \rangle + \frac{1}{2} \langle H(v) \xi, \xi \rangle + \frac{\sigma(v)}{2} \Vert \zeta \Vert^{2} + \frac{\sigma(v)}{2} \Vert \Sigma \Vert_{{\rm F}}^{2}
\\
& \subj & & g(x) + \nabla g(x)^{\top} \xi + \sigma(v) (\zeta - y) = 0,
\\
& & & X(x) + \A(x) \xi + \sigma(v) (\Sigma - Z) \succeq O,
\\
& & & \Vert \xi \Vert \leq \nu.
\end{aligned}
\end{align*}
For any iteration point $v \in \V$, problem~Q$(v)$ necessarily has a global optimum as shown below.

\begin{lemma} \label{lem:solvability}
For each $v \in \V$, there exists a global optimum of {\rm Q}$(v)$. Moreover, if {\rm (A1)} and {\rm (A2)} hold, then any global optimum $\omega(v) \coloneqq (\xi(v), \zeta(v), \Sigma(v)) \in \V$ satisfies that $\omega(v) \to \omega^{\ast}$ as $v \to v^{\ast}$.
\end{lemma}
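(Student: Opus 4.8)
The plan is to establish the two assertions separately. For the first (existence for every $v=(x,y,Z)\in\V$), note that Q$(v)$ always has a feasible point: when $\sigma(v)>0$ the triple $(0,\,y-\sigma(v)^{-1}g(x),\,Z-\sigma(v)^{-1}X(x))$ satisfies all three constraints, and when $\sigma(v)=0$ the identity $X(x)=\P_{\S_{+}^{d}}(X(x)-Z)$ forces $X(x)\succeq O$, so $\xi=0$ together with any $(\zeta,\Sigma)$ is feasible. The feasible set is closed and confines $\xi$ to $\Vert\xi\Vert\le\nu$; when $\sigma(v)>0$ the equality constraint then keeps $\zeta$ bounded on the feasible set as well, so the continuous objective is coercive in the remaining variable $\Sigma$, and a global minimizer exists by the Weierstrass theorem. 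When $\sigma(v)=0$ the objective depends only on $\xi$, which runs over a nonempty compact set, and a minimizer again exists.

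For the convergence assertion I would argue by contradiction: suppose there are $\varepsilon>0$ and $v_j=(x_j,y_j,Z_j)\to v^{\ast}$ with $v_j\ne v^{\ast}$ admitting global optima $\omega(v_j)=(\xi_j,\zeta_j,\Sigma_j)$ of Q$(v_j)$ with $\Vert\omega(v_j)-\omega^{\ast}\Vert\ge\varepsilon$; by Theorem~\ref{th:error_bound} we may assume $\sigma_j\coloneqq\sigma(v_j)>0$. First, the optimal value $\theta_j$ of Q$(v_j)$ is bounded above by its objective at the feasible triple $(0,\,y_j-\sigma_j^{-1}g(x_j),\,\kappa I)$, where $\kappa>0$ is a fixed constant with $\kappa I\succeq Z_j-\sigma_j^{-1}X(x_j)$ for all large $j$ --- which is possible because $\Vert g(x_j)\Vert\le\sigma_j$ and $X(x_j)\succeq-\sigma_j I$, both immediate from the definition of $\sigma$ --- so $\theta_j={\cal O}(\sigma_j)\to0$. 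Since $\tfrac{\sigma_j}{2}\Vert\zeta_j\Vert^{2}+\tfrac{\sigma_j}{2}\Vert\Sigma_j\Vert_{\rm F}^{2}\ge0$ and $\Vert\xi_j\Vert\le\nu$, this forces $\sigma_j(\Vert\zeta_j\Vert^{2}+\Vert\Sigma_j\Vert_{\rm F}^{2})={\cal O}(1)$, hence $\sigma_j\zeta_j\to0$ and $\sigma_j\Sigma_j\to0$. Letting $j\to\infty$ in the constraints of Q$(v_j)$ along a subsequence with $\xi_j\to\bar\xi$ then gives $\bar\xi\in\Gamma$, while $\theta_j\to0$ together with $\theta_j\ge\langle\nabla f(x_j),\xi_j\rangle+\tfrac12\langle H_j\xi_j,\xi_j\rangle\to\langle\nabla f(x^{\ast}),\bar\xi\rangle+\tfrac12\langle\nabla_{xx}^{2}L(v^{\ast})\bar\xi,\bar\xi\rangle$ makes the limiting value nonpositive, so~\eqref{ineq:SOSC_Past} forces $\bar\xi=0$. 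As every subsequential limit of the bounded sequence $\{\xi_j\}$ vanishes, $\xi_j\to0$.

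It remains to show $\{\zeta_j\}$ and $\{\Sigma_j\}$ are bounded. Because $\xi_j\to0$, the constraint $\Vert\xi\Vert\le\nu$ is eventually inactive and Q$(v_j)$ satisfies Slater's CQ, so $\omega(v_j)$ is a KKT point; the stationarity conditions in $\zeta$ and $\Sigma$ identify the dual variables with $\zeta_j$ and $\Sigma_j$, and the remaining conditions read $\nabla f(x_j)+H_j\xi_j-\nabla g(x_j)\zeta_j-\A^{\ast}(x_j)\Sigma_j=0$, $g(x_j)+\nabla g(x_j)^{\top}\xi_j+\sigma_j(\zeta_j-y_j)=0$, $\Sigma_j\succeq O$, $W_j\coloneqq X(x_j)+\A(x_j)\xi_j+\sigma_j(\Sigma_j-Z_j)\succeq O$, $\langle\Sigma_j,W_j\rangle=0$. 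If $\rho_j\coloneqq\Vert\zeta_j\Vert+\Vert\Sigma_j\Vert_{\rm F}\to\infty$ along a subsequence, then dividing the first equation by $\rho_j$ and extracting $\zeta_j/\rho_j\to\bar\zeta$, $\Sigma_j/\rho_j\to\bar\Sigma$ (so $\Vert\bar\zeta\Vert+\Vert\bar\Sigma\Vert_{\rm F}=1$ and $\bar\Sigma\succeq O$) yields $\nabla g(x^{\ast})\bar\zeta+\A^{\ast}(x^{\ast})\bar\Sigma=0$; moreover $W_j\to X(x^{\ast})$ (using $\xi_j\to0$ and $\sigma_j\Sigma_j\to0$) and $\Sigma_jW_j=O$, so $(\Sigma_j/\rho_j)W_j=O$ passes in the limit to $\bar\Sigma X(x^{\ast})=O$, whence $P_{\alpha}^{\top}\bar\Sigma=O$ by~\eqref{diagonalize:matXZ}. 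As in the proof of Lemma~\ref{lemma:base}, \cite[Equation~(16)]{Su06} with $P_{\alpha}^{\top}\bar\Sigma=O$ and $\bar\Sigma\succeq O$ gives $\P_{\S_{+}^{d}}^{\prime}(M^{\ast};-\bar\Sigma)=O$, so $(0,\bar\zeta,\bar\Sigma)$ meets the hypotheses of Lemma~\ref{lemma:base} and equals $(0,0,O)$, contradicting $\Vert\bar\zeta\Vert+\Vert\bar\Sigma\Vert_{\rm F}=1$. Hence $\{\zeta_j\}$ and $\{\Sigma_j\}$ are bounded; passing to the limit in the KKT system along any subsequence with $(\zeta_j,\Sigma_j)\to(\bar\zeta,\bar\Sigma)$ and using $\xi_j\to0$, $H_j\to\nabla_{xx}^{2}L(v^{\ast})$, $\sigma_j\to0$ shows $(x^{\ast},\bar\zeta,\bar\Sigma)$ is a KKT point of~\eqref{NSDP}, i.e.\ $(\bar\zeta,\bar\Sigma)\in\M(x^{\ast})$; by~(A1) this set is the singleton $\{(y^{\ast},Z^{\ast})\}$, so every subsequential limit of $\omega(v_j)$ equals $\omega^{\ast}$ and, by boundedness, $\omega(v_j)\to\omega^{\ast}$ --- contradicting $\Vert\omega(v_j)-\omega^{\ast}\Vert\ge\varepsilon$.

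The step I anticipate as the main obstacle is the boundedness of the dual components $\{\zeta_j\}$ and $\{\Sigma_j\}$: since the penalty $\sigma_j\to0$ the stabilizing quadratic terms degenerate, and the linear constraint $g(x_j)+\nabla g(x_j)^{\top}\xi_j+\sigma_j(\zeta_j-y_j)=0$ a priori permits $\zeta_j$ to blow up like $\sigma_j^{-1}$. The scaling argument above, routed through Lemma~\ref{lemma:base} (which encodes the uniqueness forced jointly by the SRCQ and the SOSC), is what precludes this; a secondary delicate point is extracting $\xi_j\to0$ from the vanishing optimal value by means of the second-order condition~\eqref{ineq:SOSC_Past} rather than a Newton-type linearization.
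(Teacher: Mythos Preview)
Your proof is correct and follows the paper's overall arc --- existence via Weierstrass, then $\sigma(v)\zeta(v)\to0$ and $\sigma(v)\Sigma(v)\to O$ from an explicit feasible point, then $\xi(v)\to0$ from the second-order condition~\eqref{ineq:SOSC_Past}, then boundedness of $(\zeta_j,\Sigma_j)$, then identification of the limit with $(y^{\ast},Z^{\ast})$ via $\M(x^{\ast})=\{(y^{\ast},Z^{\ast})\}$. The one substantive difference is the boundedness step. The paper rewrites the KKT conditions of P$(v_j)$ as KKT conditions of the \emph{perturbed} NSDP~\eqref{perturbedNSDP} with parameter $u_j=(H_j\xi_j,\,\nabla g(x_j)^{\top}\xi_j+\sigma_j(\zeta_j-y_j),\,\A(x_j)\xi_j+\sigma_j(\Sigma_j-Z_j))\to0$ and then invokes \cite[Proposition~4.43]{BoSh00}, which (under RCQ at $(x^{\ast},0)$) gives uniform boundedness of the multiplier map $(x,u)\mapsto\U(x,u)$ on a neighborhood. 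You instead run a direct scaling argument: assuming $\rho_j=\Vert\zeta_j\Vert+\Vert\Sigma_j\Vert_{\rm F}\to\infty$, you extract a limit $(\bar\zeta,\bar\Sigma)$ with $\nabla g(x^{\ast})\bar\zeta+\A^{\ast}(x^{\ast})\bar\Sigma=0$, $\bar\Sigma\succeq O$, $\bar\Sigma X(x^{\ast})=O$, check via the directional-derivative formula that $\P_{\S_{+}^{d}}^{\prime}(M^{\ast};-\bar\Sigma)=O$, and feed $(0,\bar\zeta,\bar\Sigma)$ into Lemma~\ref{lemma:base} to reach a contradiction.

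Both routes are valid. Yours is more self-contained --- it stays within the paper's own machinery and avoids the external perturbation-theoretic reference --- and it makes transparent that only the SRCQ portion of Lemma~\ref{lemma:base} is doing the work (since $\widehat{x}=0$ is given, the SOSC half of that lemma is vacuous here). The paper's route, by contrast, factors through the perturbed-problem viewpoint, which is a cleaner conceptual packaging of ``RCQ implies locally bounded multipliers'' and would generalize more readily if one relaxed (A1) to mere RCQ. Your choice of feasible test point $(0,\,y_j-\sigma_j^{-1}g(x_j),\,\kappa I)$ is slightly cruder than the paper's $(0,\,y-\sigma^{-1}g(x),\,Z-\sigma^{-1}X(x)+\sigma^{-1}\P_{\S_{+}^{d}}(X(x)-\sigma Z))$, but it suffices for the $\theta_j={\cal O}(\sigma_j)$ bound you need; the justification $X(x_j)\succeq-\sigma_jI$ from $\Vert X(x_j)-\P_{\S_{+}^{d}}(X(x_j)-Z_j)\Vert_{\rm F}\le\sigma_j$ is correct.
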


\begin{proof}
Let ${\cal F}$ be the objective function of Q$(v)$ and let ${\cal S}$ be the feasible set of Q$(v)$. It is clear that
\begin{align} \label{ineq:Flower}
\begin{aligned}
- \nu \Vert \nabla f(x) \Vert - \nu^{2} \Vert H(v) \Vert_{{\rm F}} \leq \langle \nabla f(x), \xi \rangle + \frac{1}{2} \langle H(v) \xi, \xi \rangle \quad \forall \xi \in \nu B.
\end{aligned}
\end{align}
Using \eqref{ineq:Flower} implies $- \nu \Vert \nabla f(x) \Vert - \nu^{2} \Vert H(v) \Vert_{{\rm F}} \leq {\cal F}(v)$ for $\omega \coloneqq (\xi, \zeta, \Sigma) \in {\cal S}$. Hence, there exists $\{ \omega_{j} \coloneqq (\xi_{j}, \zeta_{j}, \Sigma_{j}) \} \subset {\cal S}$ such that ${\cal F}(\omega_{j}) \to \eta \coloneqq \inf \{ {\cal F}(\omega); \omega \in {\cal S} \} > - \infty$ as $j \to \infty$. From the boundedness of $\{ {\cal F}(\omega_{j}) \}$ and $\{ \xi_{j} \}$, it is clear that $\{ \omega_{j} \}$ is also bounded. Without loss of generality, we can assume that $\omega_{j} \to \bar{\omega}$ as $j \to \infty$. We readily have $\bar{\omega} \in {\cal S}$ because ${\cal S}$ is closed. Moreover, we obtain $| {\cal F}(\bar{\omega}) - \eta | \leq | {\cal F}(\omega_{j}) - \eta | + | {\cal F}(\omega_{j}) - {\cal F}(\omega) | \to 0$ as $j \to \infty$, that is, ${\cal F}(\bar{\omega}) = \eta$. Therefore, the existence of a global optimum is proven.
\par
From now on, we suppose that (A1) and (A2) hold. We will confirm that any global optimum $\omega(v) = (\xi(v), \zeta(v), \Sigma(v))$ satisfies
\begin{align}
\lim_{v \to v^{\ast} }\sigma(v) \zeta(v) = 0, \quad \lim_{v \to v^{\ast}} \sigma(v) \Sigma(v) = O. \label{lim:zeta_Sigma_to_yZ}
\end{align}
Now, note that $X$ is locally Lipschitz continuous at $x^{\ast}$ and $\sigma$ satisfies the error bound condition from Theorem~\ref{th:error_bound}. Hence, there exist $c_{1} > 0$ and $r_{1} > 0$ such that for all $v = (x,y,Z) \in B(v^{\ast}, r_{1})$,
\begin{align}
\Vert X(x) - X(x^{\ast}) \Vert_{{\rm F}} \leq c_{1} \sigma(v). \label{ineq:X_error_bound}
\end{align}
Let us take $v = (x, y, Z) \in B(v^{\ast}, r_{1})$ arbitrarily. Since it can be easily seen that $( 0, y - \frac{1}{\sigma(v)} g(x), Z - \frac{1}{\sigma(v)} X(x) + \frac{1}{\sigma(v)} \P_{\S_{+}^{d}}( X(x) - \sigma(v) Z ) )$ is feasible to Q$(v)$, we have
\begin{align}
\begin{aligned} \label{ineq:Qv}
& \langle \nabla f(x), \xi(v) \rangle + \frac{1}{2} \langle H(v) \xi(v), \xi(v) \rangle + \frac{\sigma(v)}{2} \Vert \zeta(v) \Vert^{2} + \frac{\sigma(v)}{2} \Vert \Sigma(v) \Vert_{{\rm F}}^{2}
\\
& ~~ \leq \frac{1}{2\sigma(v)} \Vert \sigma(v) y - g(x) \Vert^{2} 
\\
& ~~ \qquad + \frac{1}{2\sigma(v)} \Vert \sigma(v) Z - X(x) + \P_{\S_{+}^{d}}( X(x) - \sigma(v) Z) \Vert_{{\rm F}}^{2}.
\end{aligned}
\end{align}
Combining \eqref{ineq:Flower} and \eqref{ineq:Qv} implies
\begin{align} \label{ineq:sigma_zeta_Sigma}
\begin{aligned}
&\sigma(v)^{2} \Vert \zeta(v) \Vert^{2} + \sigma(v)^{2} \Vert \Sigma(v) \Vert_{{\rm F}}^{2}
\\
&~~\leq 2\sigma(v) (\nu \Vert \nabla f(x) \Vert + \nu^{2} \Vert H(v) \Vert_{{\rm F}} ) 
\\
& \hspace{10mm} + \Vert \sigma(v) y - g(x) \Vert^{2} + \Vert \sigma(v) Z - X(x) + \P_{\S_{+}^{d}}( X(x) - \sigma(v) Z) \Vert_{{\rm F}}^{2}.
\end{aligned}
\end{align}
We recall that $v \in B(v^{\ast}, r_{1})$ is arbitrary and $v^{\ast}$ satisfies that $\sigma(v^{\ast}) = 0$, $g(x^{\ast}) = 0$, and $X(x^{\ast}) = \P_{\S_{+}^{d}}(X(x^{\ast}))$. These facts and \eqref{ineq:sigma_zeta_Sigma} indicate that $\sigma(v) \zeta(v) \to 0$ and $\sigma(v) \Sigma(v) \to O$ whenever $v \to v^{\ast}$.
\par
In what follows, the latter assertion of this lemma will be proven. We begin by verifying that
\begin{align}
\lim_{v \to v^{\ast}} \xi(v) = 0. \label{lim:xiv_to_0}
\end{align}
We assume the contrary, i.e., there exist $\varepsilon_{1} \in (0, \nu)$ and $\{ \widetilde{v}_{j} \coloneqq (\widetilde{x}_{j}, \widetilde{y}_{j}, \widetilde{Z}_{j}) \} \subset \V$ such that
\begin{align}
\textstyle \widetilde{v}_{j} \in B(v^{\ast}, \frac{r_{1}}{j}), \quad \Vert \xi(\widetilde{v}_{j}) \Vert \geq \varepsilon_{1} \quad \forall j \in \mathbb{N}. \label{ineq:contradiction_xi}
\end{align}
Let us define $\widetilde{\sigma}_{j} \coloneqq \sigma(\widetilde{v}_{j})$, $\widetilde{\omega}_{j} \coloneqq \omega(\widetilde{v}_{j})$, $\widetilde{\xi}_{j} \coloneqq \xi(\widetilde{v}_{j})$, $\widetilde{\zeta}_{j} \coloneqq \zeta(\widetilde{v}_{j})$, and $\widetilde{\Sigma}_{j} \coloneqq \Sigma(\widetilde{v}_{j})$ for $j \in \mathbb{N}$. Since $\{ \widetilde{\xi}_{j} \} \subset \nu B$, we can assume without loss of generality that $\widetilde{\xi}_{j} \to \widetilde{\xi}$ as $j \to \infty$. Combining~\eqref{def:sigma}, \eqref{lim:zeta_Sigma_to_yZ}, and~\eqref{ineq:contradiction_xi} derives
\begin{gather} \label{lim:v_xi}
\begin{gathered}
\lim_{j \to \infty} \widetilde{v}_{j} = v^{\ast}, ~ \lim_{j \to \infty} \widetilde{\sigma}_{j} = 0, 
\\
\lim_{j \to \infty} \widetilde{\sigma}_{j} \widetilde{\zeta}_{j} = 0, ~ \lim_{j \to \infty} \widetilde{\sigma}_{j} \widetilde{\Sigma}_{j} = O, ~ \lim_{j \to \infty} \widetilde{\xi}_{j} = \widetilde{\xi} \not= 0. 
\end{gathered}
\end{gather}
We arbitrarily take $j \in \mathbb{N}$. Recall that $\widetilde{\omega}_{j}$ satisfies $g(\widetilde{x}_{j}) + \nabla g(\widetilde{x}_{j})^{\top} \widetilde{\xi}_{j} + \widetilde{\sigma}_{j} (\widetilde{\zeta}_{j} - \widetilde{y}_{j}) = 0$, $X(\widetilde{x}_{j}) + \A(\widetilde{x}_{j}) \widetilde{\xi}_{j} + \widetilde{\sigma}_{j} (\widetilde{\Sigma}_{j} - \widetilde{Z}_{j}) \succeq O$, and $\Vert \widetilde{\xi}_{j} \Vert \leq \nu$. It then follows from \eqref{lim:v_xi} that
\begin{align}
g(x^{\ast}) + \nabla g(x^{\ast}) \widetilde{\xi} = 0, ~ X(x^{\ast}) + \A(x^{\ast}) \widetilde{\xi} \succeq O, ~ \Vert \widetilde{\xi} \Vert \leq \nu, ~\, {\rm i.e.,} ~\, \widetilde{\xi} \in \Gamma \backslash \{ 0 \}. \label{feasible_xi}
\end{align}
By \eqref{def:sigma} and \eqref{ineq:contradiction_xi}, we get
\begin{align}
\Vert \widetilde{\sigma}_{j} \widetilde{y}_{j} - g(\widetilde{x}_{j}) \Vert \leq \widetilde{\sigma}_{j} (\Vert y^{\ast} \Vert + r_{1} + 1). \label{ineq:tilde_y}
\end{align}
Moreover, using \eqref{def:sigma}, \eqref{ineq:X_error_bound}, \eqref{ineq:contradiction_xi}, and $X(x^{\ast}) = \P_{\S_{+}^{d}}(X(x^{\ast}))$ yields
\begin{align} \label{ineq:tilde_Z}
\begin{aligned}
& \Vert \widetilde{\sigma}_{j} \widetilde{Z}_{j} - X(\widetilde{x}_{j}) + \P_{\S_{+}^{d}}( X(\widetilde{x}_{j}) - \widetilde{\sigma}_{j} \widetilde{Z}_{j}) \Vert_{{\rm F}}
\\
&~~\leq \widetilde{\sigma}_{j} \Vert \widetilde{Z}_{j} \Vert_{{\rm F}} + \Vert X(\widetilde{x}_{j}) - X(x^{\ast}) \Vert_{{\rm F}} 
\\
& \qquad \qquad + \Vert \P_{\S_{+}^{d}}(X(\widetilde{x}_{j}) - \widetilde{\sigma}_{j} \widetilde{Z}_{j}) - \P_{\S_{+}^{d}}(X(x^{\ast})) \Vert_{{\rm F}}
\\
&~~\leq 2 \widetilde{\sigma}_{j} (\Vert Z^{\ast} \Vert_{{\rm F}} + r_{1} + c_{1}). 
\end{aligned}
\end{align}
Substituting \eqref{ineq:tilde_y} and \eqref{ineq:tilde_Z} into \eqref{ineq:Qv} implies $\langle \nabla f(\widetilde{x}_{j}), \widetilde{\xi}_{j} \rangle + \frac{1}{2} \langle H(\widetilde{v}_{j}) \widetilde{\xi}_{j}, \widetilde{\xi}_{j} \rangle \leq c_{2} \widetilde{\sigma}_{j}$, where $c_{2} \coloneqq \frac{1}{2}(\Vert y^{\ast} \Vert + r_{1} + 1)^{2} + 2(\Vert Z^{\ast} \Vert_{{\rm F}} + r_{1} + c_{1})^{2}$. Then, we have by~\eqref{lim:v_xi} and~\eqref{feasible_xi} that $\langle \nabla f(x^{\ast}), \widetilde{\xi} \rangle + \frac{1}{2} \langle \nabla_{xx}^{2}L(v^{\ast}) \widetilde{\xi}, \widetilde{\xi} \rangle \leq 0$ and $\widetilde{\xi} \in \Gamma \backslash \{ 0 \}$. However, this fact contradicts to~\eqref{ineq:SOSC_Past}. Therefore, we obtain \eqref{lim:xiv_to_0}.
\par
Finally, we will prove that $\zeta(v) \to y^{\ast}$ and $\Sigma(v) \to Z^{\ast}$ as $v \to v^{\ast}$. Let us assume the contrary, namely, there exist $\varepsilon_{2} > 0$ and $\{ \widehat{v}_{j} \coloneqq (\widehat{x}_{j}, \widehat{y}_{j}, \widehat{Z}_{j}) \} \subset \V$ such that
\begin{align}
\textstyle \widehat{v}_{j} \in B(v^{\ast}, \frac{1}{j}), \quad \Vert \zeta(\widehat{v}_{j}) - y^{\ast} \Vert + \Vert \Sigma(\widehat{v}_{j}) - Z^{\ast} \Vert_{{\rm F}} \geq \varepsilon_{2} \quad \forall j \in \mathbb{N}. \label{zeta_Sigma_varepsilon}
\end{align}
We denote $\widehat{\sigma}_{j} \coloneqq \sigma(\widehat{v}_{j})$, $\widehat{\omega}_{j} \coloneqq \omega(\widehat{v}_{j})$, $\widehat{\xi}_{j} \coloneqq \xi(\widehat{v}_{j})$, $\widehat{\zeta}_{j} \coloneqq \zeta(\widehat{v}_{j})$, and $\widehat{\Sigma}_{j} \coloneqq \Sigma(\widehat{v}_{j})$ for $j \in \mathbb{N}$. By~\eqref{lim:xiv_to_0} and~\eqref{zeta_Sigma_varepsilon}, there exists $j_{1} \in \mathbb{N}$ such that
\begin{align}
\Vert \widehat{\xi}_{j} \Vert < \nu \quad \forall j \geq j_{1}. \label{ineq:nonactive}
\end{align}
Now, we recall that $\widehat{\omega}_{j}$ satisfies the KKT conditions of Q$(\widehat{v}_{j})$ because it is an optimal solution. If $j \geq j_{1}$, then \eqref{ineq:nonactive} holds, and hence $\widehat{\omega}_{j}$ satisfies
\begin{align} \label{pKKT_uv}
\begin{aligned}
& \nabla_{x} L(\widehat{x}_{j}, \widehat{\zeta}_{j}, \widehat{\Sigma}_{j}) + r_{j} = 0, 
\\
& g(\widehat{x}_{j}) + s_{j} = 0,
\\
& X(\widehat{x}_{j}) + T_{j} - \P_{\S_{+}^{d}}( X(\widehat{x}_{j}) + T_{j} - \widehat{\Sigma}_{j} ) = O,
\end{aligned}
\end{align}
where
\begin{align} \label{def:abC}
\begin{aligned}
r_{j} &\coloneqq H(\widehat{v}_{j}) \widehat{\xi}_{j}, 
\\
s_{j} &\coloneqq \nabla g(\widehat{x}_{j})^{\top} \widehat{\xi}_{j} + \widehat{\sigma}_{j}(\widehat{\zeta}_{j} - \widehat{y}_{j}),
\\
T_{j} &\coloneqq \A(\widehat{x}_{j}) \widehat{\xi}_{j} + \widehat{\sigma}_{j} (\widehat{\Sigma}_{j} - \widehat{Z}_{j}). 
\end{aligned}
\end{align}
We have from \eqref{pKKT_uv} that $(\widehat{x}_{j}, \widehat{\zeta}_{j}, \widehat{\Sigma}_{j})$ satisfies the KKT conditions of problem~\eqref{perturbedNSDP}, which is the perturbed optimization problem related to \eqref{NSDP}, with the parameter $u_{j} \coloneqq (r_{j}, s_{j}, T_{j}) \in \V$, namely,
\begin{align}
(\widehat{\zeta}_{j}, \widehat{\Sigma}_{j}) \in \U(\widehat{x}_{j}, u_{j}) \quad \forall j \geq j_{1}. \label{subset:zeta_Sgima}
\end{align}
Let us define $u_{0} \coloneqq (0,0,O) \in \V$ and $p_{0} \coloneqq (x^{\ast}, u_{0}) \in \R^{n} \times \V$. Notice that problem~\eqref{perturbedNSDP} with $u_{0}$ satisfies the RCQ at $x^{\ast}$ because (A1) holds. It then follows from \cite[Proposition~4.43]{BoSh00} that there exist $r_{2} > 0$ and $c_{2} > 0$ satisfiying
\begin{align}
\U(x,u) \subset c_{2} B \quad \forall (x,u) \in B(p_{0}, r_{2}). \label{bounded_pKKT}
\end{align}
On the other hand, \eqref{lim:zeta_Sigma_to_yZ}, \eqref{lim:xiv_to_0}, \eqref{zeta_Sigma_varepsilon}, and \eqref{def:abC} ensure that 
\begin{align}
\lim_{j \to \infty} \widehat{x}_{j} = x^{\ast}, ~ \lim_{j \to \infty} r_{j} = 0, ~ \lim_{j \to \infty} s_{j} = 0, ~ \lim_{j \to \infty} T_{j} = O, ~ ~ \lim_{j \to \infty} u_{j} = u_{0}. \label{limit:xabCu}
\end{align}
Thus, there exists $j_{2} \in \mathbb{N}$ such that $\Vert \widehat{x}_{j} - x^{\ast} \Vert + \Vert u_{j} - u_{0} \Vert \leq r_{2}$ for all $j > j_{2}$, namely,
\begin{align}
(\widehat{x}_{j}, u_{j}) \in B(p_{0}, r_{2}) \quad \forall j > j_{2}. \label{eq:uv_zero}
\end{align}
Exploiting \eqref{subset:zeta_Sgima}, \eqref{bounded_pKKT}, and \eqref{eq:uv_zero} derives $(\widehat{\zeta}_{j}, \widehat{\Sigma}_{j}) \in c_{2} B$ for every $j > j_{3}$, where $j_{3} \coloneqq \max \{ j_{1}, j_{2} \}$. Then, we also have $(\widehat{\zeta}_{j}, \widehat{\Sigma}_{j}) \in c_{3} B$ for each $j \in \mathbb{N}$, where $c_{3} \coloneqq \max \{ c_{2}, \Vert \widehat{\zeta}_{1} \Vert + \Vert \widehat{\Sigma}_{1} \Vert_{{\rm F}}, \ldots, \Vert \widehat{\zeta}_{j_{3}} \Vert + \Vert \widehat{\Sigma}_{j_{3}} \Vert_{{\rm F}} \}$. Since $\{ (\widehat{\zeta}_{j}, \widehat{\Sigma}_{j}) \}$ is bounded, we may assume without loss of generality that
\begin{align}
\lim_{j \to \infty} \widehat{\zeta}_{j} = \widehat{\zeta}, \quad \lim_{j \to \infty} \widehat{\Sigma}_{j} = \widehat{\Sigma}. \label{lim:zeta_Sigma}
\end{align}
Combining~\eqref{pKKT_uv}, \eqref{limit:xabCu}, and \eqref{lim:zeta_Sigma} implies $\nabla_{x} L(x^{\ast}, \widehat{\zeta}, \widehat{\Sigma}) = 0$, $g(x^{\ast}) = 0$, and $X(x^{\ast}) - P_{\S_{+}^{d}}( X(x^{\ast}) - \widehat{\Sigma} ) = O$. It then follows from $\M(x^{\ast}) = \{ (y^{\ast}, Z^{\ast}) \}$ that $\widehat{\zeta} = y^{\ast}$ and $\widehat{\Sigma} = Z^{\ast}$. However, they lead to a contradiction because $\Vert \widehat{\zeta} - y^{\ast} \Vert + \Vert \widehat{\Sigma} - Z^{\ast} \Vert_{{\rm F}} \geq \varepsilon_{2}$ by~\eqref{zeta_Sigma_varepsilon} and \eqref{lim:zeta_Sigma}. Therefore, we obtain $\omega(v) \to \omega^{\ast}$ as $v \to v^{\ast}$. 
\qed
\end{proof}

Thanks to Lemma~\ref{lem:solvability}, it is verified that $\xi(v) \to 0$ as $v \to v^{\ast}$. That is to say, the constraint $\Vert \xi \Vert \leq \nu$ is redundant for $v$ sufficiently close to $v^{\ast}$. By exploiting this property, we will ensure the solvability of P$(v)$.

\begin{proposition} \label{pro:solvable}
Suppose that {\rm (A1)} and {\rm (A2)} are satisfied. For any $v \in \V$, let $\omega(v) \coloneqq (\xi(v), \zeta(v), \Sigma(v))$ be a global optimum of {\rm Q}$(v)$. Then, there exist $r > 0$ and $c > 0$ such that $\omega(v)$ is a local optimum of {\rm P}$(v)$ for all $v \in B(v^{\ast}, r)$ and satisfies that
\begin{gather*}
\Vert \delta(v) \Vert \leq c \sigma(v), \quad \Vert v + \delta (v) - v^{\ast} \Vert \leq c \sigma(v),
\\
\Vert \xi(v) \Vert \leq c \sigma(v), \quad \Vert \zeta(v) - y \Vert \leq c \sigma(v), \quad \Vert \Sigma(v) - Z \Vert_{{\rm F}} \leq c \sigma(v),
\end{gather*}
where $v \coloneqq (x, y, Z)$ and $\delta (v) \coloneqq (\xi(v), \zeta(v) - y, \Sigma(v) - Z)$.
\end{proposition}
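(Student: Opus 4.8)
The plan is to obtain the five bounds from the error bound of Theorem~\ref{th:error_bound}, after observing that the subproblem solution, reassembled as $v+\delta(v)=(x+\xi(v),\zeta(v),\Sigma(v))$, is an \emph{approximate} KKT point of the \emph{original} problem~\eqref{NSDP} whose residual is of order $\Vert\xi(v)\Vert$ times a quantity tending to $0$, plus order $\sigma(v)$; an absorption (fixed-point) argument then forces $\Vert\xi(v)\Vert={\cal O}(\sigma(v))$, and the rest follows.

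First I would note that $v=v^{\ast}$ is trivial and that $\sigma(v)>0$ for $v\neq v^{\ast}$ by Theorem~\ref{th:error_bound}. By Lemma~\ref{lem:solvability}, $\xi(v)\to0$, $\zeta(v)\to y^{\ast}$ and $\Sigma(v)\to Z^{\ast}$ as $v\to v^{\ast}$; hence for $v$ near $v^{\ast}$ the constraint $\Vert\xi\Vert\le\nu$ is inactive at $\omega(v)$, so the feasible sets of {\rm P}$(v)$ and {\rm Q}$(v)$ agree near $\omega(v)$ and $\omega(v)$ is a local optimum of {\rm P}$(v)$. Since $\sigma(v)>0$ lets one satisfy, for every $\xi$, the equality constraint of {\rm P}$(v)$ and the conic one strictly, {\rm P}$(v)$ fulfills Robinson's CQ, so $\omega(v)$ is a KKT point of {\rm P}$(v)$; the stationarity in $\zeta$ and $\Sigma$ (using the quadratic penalty terms) identifies the multipliers with $\zeta(v)$ and $\Sigma(v)$, yielding $\nabla f(x)+H(v)\xi(v)-\nabla g(x)\zeta(v)-\A^{\ast}(x)\Sigma(v)=0$, the equality feasibility, and $\Sigma(v)\succeq O$, $X(x)+\A(x)\xi(v)+\sigma(v)(\Sigma(v)-Z)\succeq O$ with their complementarity.

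The central step is to prove $\sigma(\tilde v(v))\le\varepsilon(v)\Vert\xi(v)\Vert+3\sigma(v)$ for $\tilde v(v):=v+\delta(v)$, where $\varepsilon(v)\to0$ as $v\to v^{\ast}$ (the case $\xi(v)=0$ being immediate). For the $\nabla_{x}L$-term, a second-order Taylor expansion of $\nabla f$, $\nabla g$, $\A^{\ast}$ at $x$ together with the stationarity equation cancels the first-order part and leaves $\nabla_{x}L(\tilde v(v))=(\nabla_{xx}^{2}L(x,\zeta(v),\Sigma(v))-H(v))\xi(v)+{\cal O}(\Vert\xi(v)\Vert^{2})$, and the operator in front of $\xi(v)$ tends to $O$ because $(x,\zeta(v),\Sigma(v))\to v^{\ast}$ and $H(v)\to\nabla_{xx}^{2}L(v^{\ast})$. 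For the $g$-term, Taylor plus the equality feasibility give $g(x+\xi(v))=-\sigma(v)(\zeta(v)-y)+{\cal O}(\Vert\xi(v)\Vert^{2})$ with $\Vert\zeta(v)-y\Vert\to0$. For the conic term, with $Y:=X(x)+\A(x)\xi(v)+\sigma(v)(\Sigma(v)-Z)$ the complementarity gives $\Sigma(v)Y=O$ with $\Sigma(v),Y\succeq O$, hence $\P_{\S_{+}^{d}}(Y-\Sigma(v))=Y$; a Taylor expansion of $X$ and non-expansiveness of $\P_{\S_{+}^{d}}$ then give $\Vert X(x+\xi(v))-\P_{\S_{+}^{d}}(X(x+\xi(v))-\Sigma(v))\Vert_{{\rm F}}\le2\sigma(v)+{\cal O}(\Vert\xi(v)\Vert^{2})$, using $\Vert\Sigma(v)-Z\Vert_{{\rm F}}\to0$. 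Summing the three estimates gives the claim.

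Finally I would bootstrap. As $\tilde v(v)\to v^{\ast}$, Theorem~\ref{th:error_bound} applied to $\tilde v(v)$ and to $v$ gives $\Vert\tilde v(v)-v^{\ast}\Vert\le c_{0}\sigma(\tilde v(v))$ and $\Vert v-v^{\ast}\Vert\le c_{0}\sigma(v)$ for $v$ near $v^{\ast}$; hence $\Vert\xi(v)\Vert\le\Vert\tilde v(v)-v^{\ast}\Vert+\Vert v-v^{\ast}\Vert\le c_{0}\varepsilon(v)\Vert\xi(v)\Vert+4c_{0}\sigma(v)$, and shrinking the neighbourhood so that $c_{0}\varepsilon(v)\le\frac{1}{2}$ yields $\Vert\xi(v)\Vert\le8c_{0}\sigma(v)$. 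Re-inserting this gives $\Vert\tilde v(v)-v^{\ast}\Vert\le7c_{0}\sigma(v)$, whence $\Vert\zeta(v)-y^{\ast}\Vert,\Vert\Sigma(v)-Z^{\ast}\Vert_{{\rm F}}\le7c_{0}\sigma(v)$; then $\Vert\zeta(v)-y\Vert\le\Vert\zeta(v)-y^{\ast}\Vert+\Vert y-y^{\ast}\Vert\le8c_{0}\sigma(v)$, likewise for $\Sigma(v)-Z$, and $\Vert\delta(v)\Vert$ together with $\Vert v+\delta(v)-v^{\ast}\Vert=\Vert\tilde v(v)-v^{\ast}\Vert$ are immediate; taking $c$ the largest resulting constant and $r$ the smallest radius finishes the proof. \textbf{The main obstacle} is the residual estimate of the central step: one must check that the factor multiplying $\Vert\xi(v)\Vert$ in $\sigma(\tilde v(v))$ genuinely tends to $0$ rather than being merely bounded --- which is exactly what the first-order cancellation against the stationarity equation and the limits $\zeta(v)\to y^{\ast}$, $\Sigma(v)\to Z^{\ast}$, $H(v)\to\nabla_{xx}^{2}L(v^{\ast})$ deliver --- since otherwise the absorption $\Vert\xi(v)\Vert\le c_{0}\varepsilon(v)\Vert\xi(v)\Vert+\cdots$ would be useless.
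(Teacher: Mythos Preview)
Your argument is correct and takes a genuinely different route from the paper's.

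The paper proves the key estimate $\Vert\omega(v)-\omega^{\ast}\Vert\le c_{2}\sigma(v)$ by contradiction: assuming a bad sequence, normalising by $\tau_{j}=\Vert\omega(\widetilde v_{j})-\omega^{\ast}\Vert$, and passing to the limit in the KKT system of P$(\widetilde v_{j})$ using the directional derivative $\P_{\S_{+}^{d}}^{\prime}(M^{\ast};\cdot)$; the limit triple then satisfies the hypotheses of Lemma~\ref{lemma:base}, forcing it to vanish, a contradiction. All the displayed bounds are then read off from $\Vert\omega(v)-\omega^{\ast}\Vert$ and the error bound at $v$. By contrast, you never touch $\P_{\S_{+}^{d}}^{\prime}$ or invoke Lemma~\ref{lemma:base} directly: you compute $\sigma(v+\delta(v))$ from the KKT conditions of P$(v)$ (a calculation the paper only performs \emph{later}, in Lemma~\ref{lem:sigma_property}, once the bounds of the proposition are already available), and then feed this into the error bound of Theorem~\ref{th:error_bound} at the \emph{new} point $v+\delta(v)$, closing the loop by absorption. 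Your approach is more elementary and reuses the heavy machinery (Lemma~\ref{lemma:base}) only through Theorem~\ref{th:error_bound}; the paper's approach yields $\Vert\omega(v)-\omega^{\ast}\Vert={\cal O}(\sigma(v))$ as an intermediate result, from which the five displayed bounds are slightly cleaner to read off.

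One small point: you write the Taylor remainders as ${\cal O}(\Vert\xi(v)\Vert^{2})$, which would require (A3), whereas the proposition only assumes (A1)--(A2). Under $C^{2}$ smoothness alone the remainders are $o(\Vert\xi(v)\Vert)$, but this is exactly an $\varepsilon(v)\Vert\xi(v)\Vert$ term with $\varepsilon(v)\to0$, so your absorption still works; just replace the ${\cal O}(\Vert\xi(v)\Vert^{2})$ notation accordingly.
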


\begin{proof}
Lemma~\ref{lem:solvability} guarantees that Q$(v)$ has a global optimum for all $v \in \V$. Moreover, since (A1) and (A2) are satisfied, any global optimum $\omega(v) = (\xi(v), \zeta(v), \Sigma(v)) \in \V$ satisfies that
\begin{align}
\lim_{v \to v^{\ast}} \xi(v) = 0. \label{lim:xito0}
\end{align}
Hence, there exists $r_{1} > 0$ such that $\Vert \xi(v) \Vert < \nu$ for any $v \in B(v^{\ast}, r_{1})$. This fact implies that $\omega(v)$ is a local optimum of P$(v)$ whenever $v \in B(v^{\ast}, r_{1})$.
\par
The locally Lipschitz continuity of $X$ around $x^{\ast}$ and Theorem~\ref{th:error_bound} guarantee that there exist $r_{2} > 0$ and $c_{1} > 0$ such that for $v = (x,y,Z) \in B(v^{\ast}, r_{2})$,
\begin{align}
\Vert X(x) - X(x^{\ast}) \Vert_{{\rm F}} \leq c_{1} \sigma(v), \quad \Vert v - v^{\ast} \Vert \leq c_{1} \sigma(v). \label{ineq:error_bound_sigma}
\end{align}
We begin by proving that there exist $r_{3} \in (0, r_{1})$ and $c_{2} > 0$ such that for $v \in B(v^{\ast}, r_{3})$,
\begin{align}
\Vert \omega(v) - \omega^{\ast} \Vert \leq c_{2} \sigma(v). \label{ineq:omega_sigma}
\end{align}
To this end, we assume the contrary, that is to say, there exists $\{ \widetilde{v}_{j} \coloneqq (\widetilde{x}_{j}, \widetilde{y}_{j}, \widetilde{Z}_{j}) \} \subset \V$ such that
\begin{align}
\textstyle \widetilde{v}_{j} \in B(v^{\ast}, \frac{r_{1}}{j}), \quad j \sigma(\widetilde{v}_{j}) < \Vert \omega(\widetilde{v}_{j}) - \omega^{\ast} \Vert \quad \forall j \in \mathbb{N}. \label{sigma_omega_varepsilon}
\end{align}
Now, we denote $\widetilde{\sigma}_{j} \coloneqq \sigma(\widetilde{v}_{j})$, $\widetilde{\omega}_{j} \coloneqq \omega(\widetilde{v}_{j})$, $\widetilde{\xi}_{j} \coloneqq \xi(\widetilde{v}_{j})$, $\widetilde{\zeta}_{j} \coloneqq \zeta(\widetilde{v}_{j})$, $\widetilde{\Sigma}_{j} \coloneqq \Sigma(\widetilde{v}_{j})$, and $\tau_{j} \coloneqq \Vert \widetilde{\omega}_{j} - \omega^{\ast} \Vert$ for $j \in \mathbb{N}$. It can be easily verified that $\tau_{j} > 0$ for every $j \in \mathbb{N}$ because~\eqref{sigma_omega_varepsilon} does not hold when $\tau_{j} = 0$. Moreover, let us define $\widehat{\xi}_{j} \coloneqq \frac{1}{\tau_{j}} \widetilde{\xi}_{j}$, $\widehat{\zeta}_{j} \coloneqq \frac{1}{\tau_{j}} (\widetilde{\zeta}_{j} - y^{\ast})$, $\widehat{\Sigma}_{j} \coloneqq \frac{1}{\tau_{j}} (\widetilde{\Sigma}_{j} - Z^{\ast})$, and $\widehat{\omega}_{j} \coloneqq (\widehat{\xi}_{j}, \widehat{\zeta}_{j}, \widehat{\Sigma}_{j})$ for $j \in \mathbb{N}$. From these definitions, it is clear that $\Vert \widehat{\omega}_{j} \Vert = 1$ for all $j \in \mathbb{N}$. Thus, we can assume without loss of generality that there exists $\widehat{\omega} \coloneqq (\widehat{\xi}, \widehat{\zeta}, \widehat{\Sigma}) \in \V$ such that
\begin{align}
\lim_{j \to \infty} \widehat{\omega}_{j} = \widehat{\omega}, \quad \widehat{\omega} \not = 0. \label{lim:omega_not_zero}
\end{align}
Lemma~\ref{lem:solvability}, \eqref{def:sigma}, and \eqref{sigma_omega_varepsilon} indicate
\begin{align}
\lim_{j \to \infty} \Vert \widetilde{v}_{j} - v^{\ast} \Vert = 0, \quad \lim_{j \to \infty} \tau_{j} = 0, \quad \lim_{j \to \infty} \widetilde{\sigma}_{j} = 0, \quad  \lim_{j \to \infty} \frac{\widetilde{\sigma}_{j}}{\tau_{j}} = 0. \label{lim:4kinds}
\end{align}
Moreover, since $\widetilde{v}_{j} \to v^{\ast}$ as $j \to \infty$, there exists $j_{1} \in \mathbb{N}$ such that
\begin{align}
\widetilde{v}_{j} \in B(v^{\ast}, r_{2}) \quad \forall j \geq j_{1}. \label{vi_in_Br3}
\end{align}
We arbitrarily take $j \geq j_{1}$. The first part of this proof and \eqref{sigma_omega_varepsilon} ensure that $\widetilde{\omega}_{j}$ satisfies the KKT conditions of P$(\widetilde{v}_{j})$, and hence
\begin{align}
& H(\widetilde{v}_{j}) \widetilde{\xi}_{j} + \nabla f(\widetilde{x}_{j}) - \nabla g(\widetilde{x}_{j}) \widetilde{\zeta}_{j} - \A^{\ast}(\widetilde{x}_{j}) \widetilde{\Sigma}_{j} = 0, \label{eq:subKKT1}
\\
& g(\widetilde{x}_{j}) + \nabla g(\widetilde{x}_{j})^{\top} \widetilde{\xi}_{j} + \widetilde{\sigma}_{j} (\widetilde{\zeta}_{j} - \widetilde{y}_{j}) = 0, \label{eq:subKKT2}
\\
&\begin{aligned} \label{eq:subKKT3}
& X(\widetilde{x}_{j}) + \A(\widetilde{x}_{j}) \widetilde{\xi}_{j} + \widetilde{\sigma}_{j} (\widetilde{\Sigma}_{j} - \widetilde{Z}_{j})
\\
& \hspace{15mm} = \P_{\S_{+}^{d}}(X(\widetilde{x}_{j}) + \A(\widetilde{x}_{j}) \widetilde{\xi}_{j} + \widetilde{\sigma}_{j} (\widetilde{\Sigma}_{j} - \widetilde{Z}_{j}) - \widetilde{\Sigma}_{j}).
\end{aligned}
\end{align}
Using \eqref{eq:subKKT1} implies $H(\widetilde{v}_{j}) \widetilde{\xi}_{j} - \nabla g(\widetilde{x}_{j}) ( \widetilde{\zeta}_{j} - y^{\ast} ) - \A^{\ast}(\widetilde{x}_{j}) ( \widetilde{\Sigma}_{j} - Z^{\ast} ) = - \nabla_{x}L(\widetilde{v}_{j}) - \nabla g(\widetilde{x}_{j}) (\widetilde{y}_{j} - y^{\ast}) - \A^{\ast}(\widetilde{x}_{j}) (\widetilde{Z}_{j} - Z^{\ast})$. Dividing both sides of the equality by $\tau_{j}$ and combining~\eqref{ineq:error_bound_sigma}, \eqref{vi_in_Br3}, and $\Vert \nabla_{x} L(\widetilde{x}_{j}) \Vert \leq \widetilde{\sigma}_{j}$ derive
\begin{align}
\begin{aligned} \label{ineq:HgA}
& \Vert H(\widetilde{v}_{j}) \widehat{\xi}_{j} - \nabla g(\widetilde{x}_{j}) \widehat{\zeta}_{j} - \A^{\ast}(\widetilde{x}_{j}) \widehat{\Sigma}_{j} \Vert
\\
& ~~ \leq \frac{1}{\tau_{j}} \Vert \nabla_{x} L(\widetilde{v}_{j}) \Vert + \frac{1}{\tau_{j}} \left( \Vert \nabla g(\widetilde{x}_{j}) \Vert + \sum_{\ell = 1}^{n} \Vert A_{\ell}(\widetilde{x}_{j}) \Vert_{2} \right) \Vert \widetilde{v}_{j} - v^{\ast} \Vert
\\
& ~~ \leq \frac{\widetilde{\sigma}_{j}}{\tau_{j}} \left( 1 + c_{1} \Vert \nabla g(\widetilde{x}_{j}) \Vert + c_{1} \sum_{\ell = 1}^{n} \Vert A_{\ell}(\widetilde{x}_{j}) \Vert_{2} \right).
\end{aligned}
\end{align}
We have from \eqref{eq:subKKT2} that $\nabla g(\widetilde{x}_{j})^{\top} \widetilde{\xi}_{j} = - g(\widetilde{x}_{j}) - \widetilde{\sigma}_{j} ( \widetilde{\zeta}_{j} - y^{\ast} ) + \widetilde{\sigma}_{j} (\widetilde{y}_{j} - y^{\ast})$. Dividing both sides of the equality by $\tau_{j}$ and exploiting $\Vert g(\widetilde{x}_{j}) \Vert \leq \widetilde{\sigma}_{j}$ yield
\begin{align} \label{ineq:sub_eq_g}
\Vert \nabla g(\widetilde{x}_{j})^{\top} \widehat{\xi}_{j} \Vert \leq \frac{\widetilde{\sigma}_{j}}{\tau_{j}} (1 + \Vert \widetilde{y}_{j} - y^{\ast} \Vert) + \widetilde{\sigma}_{j} \Vert \widehat{\zeta}_{j} \Vert.
\end{align}
Combining \eqref{lim:omega_not_zero}, \eqref{lim:4kinds}, \eqref{ineq:HgA}, and \eqref{ineq:sub_eq_g} yields
\begin{align}
\nabla_{xx}^{2} L(v^{\ast}) \widehat{\xi} - \nabla g(x^{\ast}) \widehat{\zeta} - \A^{\ast}(x^{\ast}) \widehat{\Sigma} = 0, \quad \nabla g(x^{\ast})^{\top} \widehat{\xi} = 0. \label{eq:basic_lemma_eq12}
\end{align}
Let us define
\begin{align*}
\widehat{Q} & \coloneqq \A(x^{\ast}) \widehat{\xi} - \widehat{\Sigma},
\\
R_{j} & \coloneqq X(x^{\ast}) - X(\widetilde{x}_{j}) - \widetilde{\sigma}_{j} (\widetilde{\Sigma}_{j} - \widetilde{Z}_{j}),
\\
S_{j} & \coloneqq \P_{\S_{+}^{d}}(X(\widetilde{x}_{j}) + \A(\widetilde{x}_{j}) \widetilde{\xi}_{j} + \widetilde{\sigma}_{j} (\widetilde{\Sigma}_{j} - \widetilde{Z}_{j}) - \widetilde{\Sigma}_{j}) - \P_{\S_{+}^{d}} (M^{\ast} + \tau_{j} \widehat{Q}),
\\
T_{j} & \coloneqq \P_{\S_{+}^{d}} (M^{\ast} + \tau_{j} \widehat{Q}) - \P_{\S_{+}^{d}}(M^{\ast}) - \tau_{j} \P_{\S_{+}^{d}}^{\prime}(M^{\ast}; \widehat{Q}).
\end{align*}
By these definitions and \eqref{eq:subKKT3}, we obtain $\A(\widetilde{x}_{j}) \widetilde{\xi}_{j} - \tau_{j} \P_{\S_{+}^{d}}^{\prime} (M^{\ast}; \widehat{Q}) = R_{j} + S_{j} + T_{j}$. Dividing both sides of the equality by $\tau_{j}$ yields
\begin{align}
\Vert \A(\widetilde{x}_{j}) \widehat{\xi}_{j} - \P_{\S_{+}^{d}}^{\prime} (M^{\ast}; \widehat{Q}) \Vert_{{\rm F}} \leq \frac{1}{\tau_{j}} (\Vert R_{j} \Vert_{{\rm F}} + \Vert S_{j} \Vert_{{\rm F}} + \Vert T_{j} \Vert_{{\rm F}}). \label{ineq:matRST}
\end{align}
Meanwhile, using \eqref{ineq:error_bound_sigma} and the non-expansion property of $\P_{\S_{+}^{d}}$ derives
\begin{align*}
& \frac{1}{\tau_{j}} \Vert R_{j} \Vert_{{\rm F}} \leq \frac{\widetilde{\sigma}_{j}}{\tau_{j}} ( c_{1} + \Vert \widetilde{Z}_{j} - Z^{\ast} \Vert_{{\rm F}} ) + \widetilde{\sigma}_{j} \Vert \widehat{\Sigma}_{j} \Vert_{{\rm F}}, 
\\
& \frac{1}{\tau_{j}} \Vert S_{j} \Vert_{{\rm F}} \leq \frac{1}{\tau_{j}} \Vert R_{j}  \Vert_{{\rm F}} + \Vert \A(\widetilde{x}_{j}) \widehat{\xi}_{j} - \A(x^{\ast}) \widehat{\xi} \Vert_{{\rm F}} + \Vert \widehat{\Sigma}_{j} - \widehat{\Sigma} \Vert_{{\rm F}}.
\end{align*}
It then follows from \eqref{lim:omega_not_zero} and \eqref{lim:4kinds} that $\frac{1}{\tau_{j}} \Vert R_{j} \Vert_{{\rm F}} \to 0$ and $\frac{1}{\tau_{j}} \Vert S_{j} \Vert_{{\rm F}} \to 0$ as $j \to \infty$. Moreover, the directional differentiability of $\P_{\S_{+}^{d}}$ and \eqref{lim:4kinds} ensure $\frac{1}{\tau_{j}} \Vert T_{j} \Vert_{{\rm F}} \to 0$. These facts and \eqref{ineq:matRST} imply
\begin{align}
\A(x^{\ast}) \widehat{\xi} - \P_{\S_{+}^{d}}^{\prime} (X(x^{\ast}) - Z^{\ast}; \A(x^{\ast}) \widehat{\xi} - \widehat{\Sigma}) = O. \label{eq:basic_lemma_eq3}
\end{align}
Since \eqref{eq:basic_lemma_eq12} and \eqref{eq:basic_lemma_eq3} hold, Lemma~\ref{lemma:base} can be applied. As a result, we get $\widehat{\omega} = 0$. However, this contradicts to \eqref{lim:omega_not_zero}. Therefore, it can be verified that \eqref{ineq:omega_sigma} holds. Recall that $\sigma(v) \to 0$ as $v \to v^{\ast}$. Then, we have by~\eqref{ineq:omega_sigma} that there exists $r_{4} > 0$ such that
\begin{align}
\Vert \omega(v) - \omega^{\ast} \Vert \leq \nu \quad \forall v \in B(v^{\ast}, r_{4}). \label{ineq:omega_r0}
\end{align}
Let us denote $r \coloneqq \min \{ r_{2}, r_{3}, r_{4} \}$ and $c \coloneqq 2c_{1} + c_{2}$. We take $v \in B(v^{\ast}, r)$ arbitrarily. Since $r \leq r_{3} < r_{1}$ holds, the first part of the proof guarantees that $\omega(v)$ is a KKT point of P$(v)$. Moreover, it is clear that $\omega(v) \in B(\omega^{\ast}, \nu)$ from $r \leq r_{4}$ and \eqref{ineq:omega_r0}. Combining~\eqref{ineq:error_bound_sigma} and \eqref{ineq:omega_sigma} yields
\begin{align}
\begin{aligned} \label{ineq:delta_v_c}
\Vert \delta(v) \Vert \leq \Vert \omega(v) - \omega^{\ast} \Vert + \Vert y - y^{\ast} \Vert + \Vert Z - Z^{\ast} \Vert_{{\rm F}} \leq (c_{1} + c_{2}) \sigma(v),
\end{aligned}
\end{align}
Noting the definitions of $\delta(v)$ and $c$ implies that $\Vert \delta(v) \Vert \leq c\sigma(v)$, $\Vert \xi(v) \Vert \leq c \sigma(v)$, $\Vert \zeta(v) - y \Vert \leq c \sigma(v)$, and $\Vert \Sigma(v) - Z \Vert_{{\rm F}} \leq c \sigma(v)$. We have from~\eqref{ineq:error_bound_sigma} and~\eqref{ineq:delta_v_c} that $\Vert v + \delta(v) - v^{\ast} \Vert \leq \Vert v - v^{\ast} \Vert + \Vert \delta(v) \Vert \leq c \sigma(v)$.
\qed
\end{proof}

Now, we provide several properties that directly affect the convergence rate of Algorithm~\ref{Local_SQSDP}.

\begin{lemma} \label{lem:sigma_property}
Suppose that {\rm (A1)}, {\rm (A2)}, and {\rm (A3)} are satisfied. Then, there exists $r > 0$ such that for any $v \in B(v^{\ast}, r)$, problem {\rm P}$(v)$ has a local optimum $\omega(v) \coloneqq (\xi(v), \zeta(v), \Sigma(v)) \in B(\omega^{\ast}, r)$, and $\omega(v)$ satisfies the following statements:
\begin{description}
\item[{\rm (a)}] For any $\epsilon > 0$, there exists $\eta > 0$ such that $\sigma(v + \delta(v)) \leq \epsilon \sigma(v)$ for any $v \in B(v^{\ast}, \eta)$, where $v \coloneqq (x, y, Z)$ and $\delta(v) \coloneqq (\xi(v), \zeta(v)-y, \Sigma(v) - Z)$;

\item[{\rm (b)}] if $H(v) - \nabla_{xx}^{2}L(v^{\ast}) = {\cal O}(\Vert v- v^{\ast} \Vert)) ~ (v \to v^{\ast})$, then there exist $\mu > 0$ and $c > 0$ such that $\sigma(v + \delta(v)) \leq c \sigma(v)^{2}$ for all $v \in B(v^{\ast}, \mu)$.
\end{description}
\end{lemma}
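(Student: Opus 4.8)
The plan is to bound the three summands of $\sigma(v+\delta(v))$ separately, using the KKT conditions of $\mathrm{P}(v)$ satisfied by $\omega(v)$ together with Taylor expansions of $f$, $g$, and $X$ about $x$. By Lemma~\ref{lem:solvability} and Proposition~\ref{pro:solvable} there is $r>0$ such that, for every $v=(x,y,Z)\in B(v^{\ast},r)$, the global optimum $\omega(v)=(\xi(v),\zeta(v),\Sigma(v))$ of $\mathrm{Q}(v)$ exists, lies in $B(\omega^{\ast},r)$, is a KKT point of $\mathrm{P}(v)$, and satisfies $\Vert\xi(v)\Vert\le c\sigma(v)$, $\Vert\zeta(v)-y\Vert\le c\sigma(v)$, $\Vert\Sigma(v)-Z\Vert_{{\rm F}}\le c\sigma(v)$, $\Vert v+\delta(v)-v^{\ast}\Vert\le c\sigma(v)$; moreover $(\zeta(v),\Sigma(v))\to(y^{\ast},Z^{\ast})$, so this pair stays bounded near $v^{\ast}$, and Theorem~\ref{th:error_bound} with Lemma~\ref{lem:sigma_Lipschitz} makes $\Vert v-v^{\ast}\Vert$ and $\sigma(v)$ comparable. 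Writing $v^{+}\coloneqq v+\delta(v)=(x+\xi(v),\zeta(v),\Sigma(v))$, one has $\sigma(v^{+})=\Vert\nabla_{x}L(v^{+})\Vert+\Vert g(x+\xi(v))\Vert+\Vert X(x+\xi(v))-\P_{\S_{+}^{d}}(X(x+\xi(v))-\Sigma(v))\Vert_{{\rm F}}$, and the proof reduces to estimating each summand.

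For the first summand I would substitute the stationarity equation $H(v)\xi(v)+\nabla f(x)-\nabla g(x)\zeta(v)-\A^{\ast}(x)\Sigma(v)=0$ into $\nabla_{x}L(v^{+})=\nabla f(x+\xi(v))-\nabla g(x+\xi(v))\zeta(v)-\A^{\ast}(x+\xi(v))\Sigma(v)$ and Taylor-expand $\nabla f$, the $\nabla g_{i}$, and the $A_{j}$ about $x$; the first-order-in-$\xi(v)$ parts reassemble exactly into $[\nabla_{xx}^{2}L(x,\zeta(v),\Sigma(v))-H(v)]\xi(v)$, leaving a remainder of order $\Vert\xi(v)\Vert^{2}(1+\Vert\zeta(v)\Vert+\Vert\Sigma(v)\Vert_{{\rm F}})$. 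Splitting the coefficient as $[\nabla_{xx}^{2}L(x,\zeta(v),\Sigma(v))-\nabla_{xx}^{2}L(v^{\ast})]+[\nabla_{xx}^{2}L(v^{\ast})-H(v)]$, both brackets tend to $0$ as $v\to v^{\ast}$ (by continuity of $\nabla_{xx}^{2}L$ with $(x,\zeta(v),\Sigma(v))\to v^{\ast}$, and by $H(v)\to\nabla_{xx}^{2}L(v^{\ast})$), so after multiplication by $\Vert\xi(v)\Vert\le c\sigma(v)$ the first summand is $o(\sigma(v))$. For the second summand, the equality-constraint KKT equation gives $g(x)+\nabla g(x)^{\top}\xi(v)=-\sigma(v)(\zeta(v)-y)$, hence $\Vert g(x+\xi(v))\Vert\le\sigma(v)\Vert\zeta(v)-y\Vert+\Vert g(x+\xi(v))-g(x)-\nabla g(x)^{\top}\xi(v)\Vert\le c\sigma(v)^{2}+o(\Vert\xi(v)\Vert)=o(\sigma(v))$. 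For the third summand, set $W\coloneqq X(x)+\A(x)\xi(v)+\sigma(v)(\Sigma(v)-Z)$, so the conic KKT equation reads $W=\P_{\S_{+}^{d}}(W-\Sigma(v))$; with $E\coloneqq X(x+\xi(v))-W$ one gets $\Vert E\Vert_{{\rm F}}\le\sigma(v)\Vert\Sigma(v)-Z\Vert_{{\rm F}}+\Vert X(x+\xi(v))-X(x)-\A(x)\xi(v)\Vert_{{\rm F}}=o(\sigma(v))$, and non-expansiveness of $\P_{\S_{+}^{d}}$ yields $\Vert X(x+\xi(v))-\P_{\S_{+}^{d}}(X(x+\xi(v))-\Sigma(v))\Vert_{{\rm F}}=\Vert E+\P_{\S_{+}^{d}}(W-\Sigma(v))-\P_{\S_{+}^{d}}(W+E-\Sigma(v))\Vert_{{\rm F}}\le 2\Vert E\Vert_{{\rm F}}=o(\sigma(v))$. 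Summing, $\sigma(v^{+})=o(\sigma(v))$, which is exactly statement (a): given $\epsilon>0$, pick $\eta$ so that the combined $o(\cdot)$ bound is at most $\epsilon\sigma(v)$ on $B(v^{\ast},\eta)$.

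For statement (b) I would rerun the same three estimates, now invoking (A3) and $H(v)-\nabla_{xx}^{2}L(v^{\ast})={\cal O}(\Vert v-v^{\ast}\Vert)$. The Taylor remainders of $\nabla f$, $g$, and $X$ then become ${\cal O}(\Vert\xi(v)\Vert^{2})={\cal O}(\sigma(v)^{2})$; local Lipschitz continuity of $\nabla_{xx}^{2}L$, supplied by (A3), makes $\Vert\nabla_{xx}^{2}L(x,\zeta(v),\Sigma(v))-\nabla_{xx}^{2}L(v^{\ast})\Vert={\cal O}(\Vert x-x^{\ast}\Vert+\Vert\zeta(v)-y^{\ast}\Vert+\Vert\Sigma(v)-Z^{\ast}\Vert_{{\rm F}})={\cal O}(\sigma(v))$ (each piece being bounded by $\Vert v-v^{\ast}\Vert+\Vert\delta(v)\Vert$, both ${\cal O}(\sigma(v))$), while $\Vert\nabla_{xx}^{2}L(v^{\ast})-H(v)\Vert={\cal O}(\Vert v-v^{\ast}\Vert)={\cal O}(\sigma(v))$; multiplying this ${\cal O}(\sigma(v))$ coefficient by $\Vert\xi(v)\Vert={\cal O}(\sigma(v))$ gives ${\cal O}(\sigma(v)^{2})$. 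The contributions $\sigma(v)\Vert\zeta(v)-y\Vert$ and $\sigma(v)\Vert\Sigma(v)-Z\Vert_{{\rm F}}$ are ${\cal O}(\sigma(v)^{2})$ by Proposition~\ref{pro:solvable}, so $\sigma(v^{+})={\cal O}(\sigma(v)^{2})$ on a neighborhood of $v^{\ast}$, which is (b). I expect the main obstacle to be the bookkeeping in the first summand, namely checking that the first-order parts of $\nabla f(x+\xi(v))-\nabla f(x)$, $-[\nabla g(x+\xi(v))-\nabla g(x)]\zeta(v)$, and $-[\A^{\ast}(x+\xi(v))-\A^{\ast}(x)]\Sigma(v)$ combine exactly into $\nabla_{xx}^{2}L(x,\zeta(v),\Sigma(v))\xi(v)$ --- in particular that the $j$-th component of $[\A^{\ast}(x+\xi(v))-\A^{\ast}(x)]\Sigma(v)$ is $\sum_{\ell}[\xi(v)]_{\ell}\langle\Sigma(v),\partial A_{j}(x)/\partial[x]_{\ell}\rangle$ up to ${\cal O}(\Vert\Sigma(v)\Vert_{{\rm F}}\Vert\xi(v)\Vert^{2})$, which reproduces the $X$-curvature part of $\nabla_{xx}^{2}L$. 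The remaining steps are routine applications of Taylor's theorem, non-expansiveness of $\P_{\S_{+}^{d}}$, and the ${\cal O}(\sigma(v))$ bounds from Proposition~\ref{pro:solvable}.
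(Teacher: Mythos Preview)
Your proposal is correct and follows essentially the same route as the paper: both split $\sigma(v+\delta(v))$ into its three summands, use the KKT conditions of $\mathrm{P}(v)$ together with second-order Taylor expansions (available via (A3)) and non-expansiveness of $\P_{\S_{+}^{d}}$, and then distinguish (a) from (b) solely through whether $\Vert H(v)-\nabla_{xx}^{2}L(v^{\ast})\Vert_{2}$ is merely $o(1)$ or ${\cal O}(\Vert v-v^{\ast}\Vert)$. The only cosmetic difference is that the paper Taylor-expands $\nabla_{x}L$ as a function on $\V$ (obtaining $\nabla_{xx}^{2}L(v)\xi(v)$ at the old multipliers), whereas you expand $\nabla f$, $\nabla g$, and $\A^{\ast}$ separately in $x$ (obtaining $\nabla_{xx}^{2}L(x,\zeta(v),\Sigma(v))\xi(v)$ at the new multipliers); both versions are then compared with $\nabla_{xx}^{2}L(v^{\ast})$ and lead to the same bounds.
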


\begin{proof}
From Theorem~\ref{th:error_bound} and Proposition~\ref{pro:solvable}, there exist $r_{1} > 0$ and $c_{1} > 0$ such that for each $v = (x, y, Z) \in B(v^{\ast}, r_{1})$, problem {\rm P}$(v)$ has a local optimum $\omega(v) = (\xi(v), \zeta(v), \Sigma(v)) \in B(\omega^{\ast}, \nu)$ satisfying
\begin{align} \label{ineq:delta_v}
\begin{aligned}
& \Vert v - v^{\ast} \Vert \leq c_{1} \sigma(v), & & \Vert \delta(v) \Vert \leq c_{1} \sigma(v), & & \Vert v + \delta(v) - v^{\ast} \Vert \leq c_{1} \sigma(v), 
\\
& \Vert \xi(v) \Vert \leq c_{1} \sigma(v), & & \Vert \zeta(v) - y \Vert \leq c_{1} \sigma(v), & & \Vert \Sigma(v) - Z \Vert_{{\rm F}} \leq c_{1} \sigma(v),
\end{aligned}
\end{align} 
where $\delta(v) = (\xi(v), \zeta(v) - y, \Sigma(v) - Z)$. Since $\omega(v)$ satisfies the KKT conditions of {\rm P}$(v)$ for $v = (x, y, Z) \in B(v^{\ast}, r_{1})$, we have
\begin{align}
& H(v) \xi(v) + \nabla f(x) - \nabla g(x) \zeta(v) - \A^{\ast}(x) \Sigma(v) = 0, \label{subKKT1}
\\
& g(x) + \nabla g(x)^{\top} \xi(v) + \sigma(v) (\zeta(v) - y) = 0, \label{subKKT2}
\\
&
\begin{aligned} \label{subKKT3}
& X(x) + \A(x) \xi(v) + \sigma(v) (\Sigma(v) - Z) 
\\
& \hspace{15mm} = \P_{\S_{+}^{d}}(X(x) + \A(x) \xi(v) + \sigma(v) (\Sigma(v) - Z) - \Sigma(v)). 
\end{aligned}
\end{align}
Note that $g$ and $X$ are twice continuously differentiable on $\R^{n}$, $\nabla_{x} L$ is continuously differentiable on $\V$, and $\nabla_{xx}^{2} L$ is locally Lipschitz continuous at $v^{\ast}$, where the locally Lipschitz continuity of $\nabla_{xx}^{2} L$ is obtained from (A3). These facts guarantee that there exist $r_{2} > 0$ and $c_{2} > 0$ such that
\begin{align}
&\hspace{-2.6mm}
\begin{aligned} \label{ineq:nab_Lag}
& \Vert \nabla_{x} L(\widetilde{v}) - \nabla_{x} L(\widehat{v}) - \nabla_{xx}^{2} L(\widehat{v}) (\widetilde{x} - \widehat{x}) + \nabla g(\widehat{x}) (\widetilde{y} - \widehat{y}) + \A^{\ast}(\widehat{x}) (\widetilde{Z} - \widehat{Z}) \Vert 
\\
& \leq c_{2} \Vert \widetilde{v} - \widehat{v} \Vert^{2}, 
\end{aligned}
\\
&\hspace{-2mm} \Vert \nabla_{xx}^{2} L(\widetilde{v}) - \nabla_{xx}^{2} L(\widehat{v}) \Vert_{2} \leq c_{2} \Vert \widetilde{v} - \widehat{v} \Vert, \label{ineq:nab_Lag2}
\\
&\hspace{-2mm} \Vert g(\widetilde{x}) - g(\widehat{x}) - \nabla g(\widehat{x})^{\top} (\widetilde{x} - \widehat{x}) \Vert \leq c_{2} \Vert \widetilde{x} - \widehat{x} \Vert^{2}, \label{ineq:eq_g}
\\
&\hspace{-2mm} \Vert X(\widetilde{x}) - X(\widehat{x}) - \A(\widehat{x}) (\widetilde{x} - \widehat{x}) \Vert_{{\rm F}} \leq c_{2} \Vert \widetilde{x} - \widehat{x} \Vert^{2} \label{ineq:func_X}
\end{align}
for $\widetilde{v} \coloneqq (\widetilde{x}, \widetilde{y}, \widetilde{Z}) \in B(v^{\ast}, r_{2})$ and $\widehat{v} \coloneqq (\widehat{x}, \widehat{y}, \widehat{Z}) \in B(v^{\ast}, r_{2})$. The continuity of $\sigma$ and \eqref{ineq:delta_v} ensure the existence of $r_{3} >0$ such that
\begin{align}
\Vert v + \delta(v) - v^{\ast} \Vert \leq c_{1} \sigma(v) \leq \min \{ r_{1}, r_{2} \} \label{ineq:delta_v2}
\end{align}
for any $v \in B(v^{\ast}, r_{3})$. Let $r_{4} \coloneqq \min \{ r_{1}, r_{2}, r_{3} \}$. We have from \eqref{subKKT1} that
\begin{align*}
& \nabla_{x} L(v + \delta(v)) 
\\
& = \nabla_{x} L(v + \delta(v)) - H(v) \xi(v) - \nabla f(x) + \nabla g(x) \zeta(v) + \A^{\ast}(x) \Sigma(v)
\\
& = \nabla_{x} L(v + \delta(v)) - \nabla_{x} L(v) - \nabla_{xx}^{2} L(v) \xi(v)
\\
& \hspace{10mm} + \nabla g(x) (\zeta(v)-y)  + \A^{\ast}(x) (\Sigma(v)-Z) + \nabla_{xx}^{2} L(v) \xi(v) - H(v)\xi(v) ,
\end{align*}
and hence combining \eqref{ineq:delta_v}, \eqref{ineq:delta_v}, \eqref{ineq:nab_Lag}, \eqref{ineq:nab_Lag2}, and \eqref{ineq:delta_v2} yields
\begin{align}
\begin{aligned}\label{ineq:nabL_next1}
\Vert \nabla_{x} L(v + \delta(v)) \Vert 
&\leq c_{2} \Vert \delta(v) \Vert^{2} + \Vert H(v) - \nabla_{xx}^{2} L(v) \Vert_{2} \Vert \xi(v) \Vert 
\\
&\leq c_{1}^{2} c_{2} \sigma(v)^{2} + c_{1} \sigma(v) \Vert H(v) - \nabla_{xx}^{2} L(v^{\ast}) \Vert_{2} 
\\
& \hspace{25mm} + c_{1} \sigma(v) \Vert \nabla_{xx}^{2} L(v^{\ast}) - \nabla_{xx}^{2} L(v) \Vert_{2} 
\\
&\leq 2c_{1}^{2} c_{2} \sigma(v)^{2} + c_{1} \sigma(v) \Vert H(v) - \nabla_{xx}^{2} L(v^{\ast}) \Vert_{2} 
\end{aligned}
\end{align}
for every $v \in B(v^{\ast}, r_{4})$. On the other hand, exploiting \eqref{subKKT2} and \eqref{ineq:delta_v} implies that $\Vert g(x) + \nabla g(x)^{\top} \xi(v)\Vert = \sigma(v) \Vert (\zeta(v) - y) \Vert \leq c_{1} \sigma(v)^{2}$. From this inequality, \eqref{ineq:delta_v}, \eqref{ineq:eq_g}, and \eqref{ineq:delta_v2}, it can be verified that for any $v \in B(v^{\ast}, r_{4})$,
\begin{align}
\Vert g(x + \xi(v)) \Vert \leq ( c_{1}^{2} c_{2} + c_{1} ) \sigma(v)^{2}. \label{ineq:eqg_next2}
\end{align}
For simplicity, we use the following notation:
\begin{align*}
R(v) &\coloneqq X(x + \xi(v)) - X(x) - \A(x) \xi(v),
\\
S(v) &\coloneqq X(x) + \A(x) \xi(v) + \sigma(v)(\Sigma(v) - Z) - \Sigma(v),
\\
T(v) &\coloneqq X(x) + \A(x) \xi(v) + R(v) - \Sigma(v).
\end{align*}
By using \eqref{ineq:func_X}, \eqref{ineq:delta_v2}, and \eqref{ineq:delta_v}, we can evaluate $R(v)$ as follows: For $v \in B(v^{\ast}, r_{4})$,
\begin{align}
\Vert R(v) \Vert_{{\rm F}} \leq c_{2} \Vert \xi(v) \Vert^{2} \leq c_{1}^{2} c_{2} \sigma(v)^{2}. \label{ineq:mat_R}
\end{align}
It follows from \eqref{subKKT3}, \eqref{ineq:mat_R}, \eqref{ineq:delta_v}, and the non-expansion property of $\P_{\S_{+}^{d}}$ that
\begin{align}
\hspace{-2.5mm}
\begin{aligned} \label{ineq:funcX_next3}
& \Vert X(x + \xi(v)) - \P_{\S_{+}^{d}}(X(x + \xi(v)) - \Sigma(v)) \Vert_{{\rm F}} 
\\
& = \Vert X(x) + \A(x) \xi(v) + R(v) - \P_{\S_{+}^{d}}(X(x) + \A(x) \xi(v) + R(v) - \Sigma(v)) \Vert_{{\rm F}} 
\\
& \leq \Vert \P_{\S_{+}^{d}}(S(v)) - \P_{\S_{+}^{d}}(T(v)) \Vert_{{\rm F}} + \Vert R(v) \Vert_{{\rm F}} + \sigma(v) \Vert \Sigma(v) - Z \Vert_{{\rm F}} 
\\
& \leq \Vert S(v) - T(v) \Vert_{{\rm F}} + \Vert R(v) \Vert_{{\rm F}} + \sigma(v) \Vert \Sigma(v) - Z \Vert_{{\rm F}} 
\\
& \leq 2 \Vert R(v) \Vert_{{\rm F}} + 2 \sigma(v) \Vert \Sigma(v) - Z \Vert_{{\rm F}} 
\\
& \leq 2(c_{1}^{2} c_{2} + c_{1}) \sigma(v)^{2} 
\end{aligned}
\end{align}
for all $v =(x, y, Z) \in B(v^{\ast}, r_{4})$. In the following, we show the assertions of the lemma by utilizing the above results.
\par
We prove item (a). Let $\epsilon > 0$ be arbitrary. Since $H(v) \to \nabla_{xx}^{2} L(v^{\ast})$ and $\sigma(v) \to 0$ as $v \to v^{\ast}$, there exists $r_{5} > 0$ such that for each $v \in B(v^{\ast}, r_{5})$,
\begin{align}
\Vert H(v) - \nabla_{xx}^{2}L(v^{\ast}) \Vert_{2} \leq \frac{\epsilon}{2c_{1}}, \quad \sigma(v) \leq \frac{\epsilon}{2(5 c_{1}^{2} c_{2} + 3 c_{1})}. \label{ineq:H_sigma}
\end{align}
Let $\eta \coloneqq \min \{ r_{4}, r_{5} \} > 0$. Combining~\eqref{def:sigma}, \eqref{ineq:delta_v}, \eqref{ineq:nabL_next1}, \eqref{ineq:eqg_next2}, and \eqref{ineq:funcX_next3} implies $\sigma( v + \delta(v) ) \leq \epsilon \sigma(v)$ for all $v \in B(v^{\ast}, \eta)$. Item~(a) is proven.
\par
Now suppose that the assumption of item~(b) holds, i.e., there exist $r_{6} > 0$ and $c_{3} > 0$ such that $\Vert H(v) - \nabla_{xx}^{2}L(v^{\ast}) \Vert_{2} \leq c_{3} \Vert v - v^{\ast} \Vert$ for $v \in B(v^{\ast}, r_{6})$. If we define $\mu \coloneqq \min \{ r_{4}, r_{6} \} > 0$ and $c \coloneqq 5c_{1}^{2} c_{2} + c_{1}^{2} c_{3} + 3 c_{1} > 0$, then~\eqref{def:sigma}, \eqref{ineq:delta_v}, \eqref{ineq:nabL_next1}, \eqref{ineq:eqg_next2}, and \eqref{ineq:funcX_next3} derive $\sigma( v + \delta(v) ) \leq c \sigma(v)^{2}$ for $v \in B(v^{\ast}, \mu)$. The result shows that item~(b) holds.
\qed
\end{proof}

\begin{theorem}
Suppose that {\rm (A1)}, {\rm (A2)}, and {\rm (A3)} hold. Then, there exists $r > 0$ such that if the initial point of Algorithm~{\rm \ref{Local_SQSDP}}, say $v_{0} \in \V$, satisfies $v_{0} \in B(v^{\ast}, r)$, Algorithm~{\rm \ref{Local_SQSDP}} is well defined and the generated sequence $\{ v_{k} \}$ converges superlinearly to $v^{\ast}$. Moreover, if $H(v) - \nabla_{xx}^{2}L(v^{\ast}) = {\cal O}(\Vert v - v^{\ast} \Vert) ~ (v \to v^{\ast})$, then the sequence $\{ v_{k} \}$ converges quadratically to $v^{\ast}$.
\end{theorem}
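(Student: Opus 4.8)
The plan is to derive the theorem by combining Theorem~\ref{th:error_bound}, Lemma~\ref{lem:sigma_Lipschitz}, Proposition~\ref{pro:solvable}, and Lemma~\ref{lem:sigma_property}, all of which have already carried out the analytical work. First I would fix a radius $r_{0} > 0$ small enough that these four results all hold on $B(v^{\ast}, r_{0})$ with constants $c_{1}, c_{2}, c_{3} > 0$, so that for every $v = (x, y, Z) \in B(v^{\ast}, r_{0})$ one has $\Vert v - v^{\ast} \Vert \leq c_{1} \sigma(v)$, $\sigma(v) \leq c_{2} \Vert v - v^{\ast} \Vert$, and problem P$(v)$ admits the local optimum $\omega(v) = (\xi(v), \zeta(v), \Sigma(v)) \in B(\omega^{\ast}, \nu)$ of Proposition~\ref{pro:solvable} satisfying $\Vert v + \delta(v) - v^{\ast} \Vert \leq c_{3} \sigma(v)$, where $\delta(v) = (\xi(v), \zeta(v) - y, \Sigma(v) - Z)$. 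Since $v + \delta(v) = (x + \xi(v), \zeta(v), \Sigma(v))$, this $\omega(v)$ is exactly the local optimum that Step~2 of Algorithm~\ref{Local_SQSDP} is understood to return and $v_{k+1} = v_{k} + \delta(v_{k})$; hence the iteration is well defined as long as each $v_{k}$ lies in $B(v^{\ast}, r_{0})$.

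The next step is to show the iterates never leave a fixed ball and that the residual $\sigma$ decays geometrically. Applying Lemma~\ref{lem:sigma_property}(a) with $\epsilon = \tfrac12$ yields $\eta \in (0, r_{0}]$ with $\sigma(v + \delta(v)) \leq \tfrac12 \sigma(v)$ for $v \in B(v^{\ast}, \eta)$; put $r \coloneqq \eta / \max\{1, c_{2} c_{3}\}$. I would then prove by induction on $k$ that $v_{k} \in B(v^{\ast}, \eta)$: if $v_{0}, \ldots, v_{k} \in B(v^{\ast}, \eta)$, iterating the one-step contraction gives $\sigma(v_{j}) \leq 2^{-j} \sigma(v_{0})$ for $j \leq k$, so $\Vert v_{k+1} - v^{\ast} \Vert = \Vert v_{k} + \delta(v_{k}) - v^{\ast} \Vert \leq c_{3} \sigma(v_{k}) \leq c_{3} \sigma(v_{0}) \leq c_{2} c_{3} \Vert v_{0} - v^{\ast} \Vert \leq c_{2} c_{3} r \leq \eta$. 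Thus Algorithm~\ref{Local_SQSDP} is well defined with $\{v_{k}\} \subset B(v^{\ast}, \eta)$ and $\sigma(v_{k}) \to 0$, whence $\Vert v_{k} - v^{\ast} \Vert \leq c_{1} \sigma(v_{k}) \to 0$, i.e.\ $v_{k} \to v^{\ast}$; by the standing assumption $v_{k} \neq v^{\ast}$, so $\sigma(v_{k}) > 0$ and $\Vert v_{k} - v^{\ast} \Vert > 0$ for every $k$ and the convergence ratios are well defined.

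For the superlinear rate, fix an arbitrary $\epsilon > 0$ and take $\eta_{\epsilon} \in (0, \eta]$ from Lemma~\ref{lem:sigma_property}(a) so that $\sigma(v + \delta(v)) \leq \epsilon \sigma(v)$ on $B(v^{\ast}, \eta_{\epsilon})$. Since $v_{k} \to v^{\ast}$, there is $K$ with $v_{k} \in B(v^{\ast}, \eta_{\epsilon})$ for $k \geq K$, and then $\Vert v_{k+1} - v^{\ast} \Vert \leq c_{1} \sigma(v_{k+1}) = c_{1} \sigma(v_{k} + \delta(v_{k})) \leq c_{1} \epsilon \sigma(v_{k}) \leq c_{1} c_{2} \epsilon \Vert v_{k} - v^{\ast} \Vert$, so $\limsup_{k \to \infty} \Vert v_{k+1} - v^{\ast} \Vert / \Vert v_{k} - v^{\ast} \Vert \leq c_{1} c_{2} \epsilon$; letting $\epsilon \downarrow 0$ gives $\Vert v_{k+1} - v^{\ast} \Vert / \Vert v_{k} - v^{\ast} \Vert \to 0$. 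For the quadratic rate, under $H(v) - \nabla_{xx}^{2} L(v^{\ast}) = {\cal O}(\Vert v - v^{\ast} \Vert)$ I would instead use Lemma~\ref{lem:sigma_property}(b): for all large $k$ (so that $v_{k} \in B(v^{\ast}, \mu)$) one has $\Vert v_{k+1} - v^{\ast} \Vert \leq c_{1} \sigma(v_{k+1}) \leq c_{1} c\, \sigma(v_{k})^{2} \leq c_{1} c\, c_{2}^{2} \Vert v_{k} - v^{\ast} \Vert^{2}$, which is the claimed quadratic estimate.

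The substantive obstacles — the error bound for $\sigma$, the solvability of P$(v)$ with the $\sigma$-proportional bounds on $\delta(v)$, and the single-step decay of $\sigma$ — have already been overcome in the preceding results, so the only point requiring care here is the induction that confines $\{v_{k}\}$ to a ball on which those results apply: it works because the geometric decay of $\sigma(v_{k})$ dominates the possibly expansive factor $c_{2} c_{3}$, which is precisely why $r$ must be chosen smaller than $\eta$ by that factor.
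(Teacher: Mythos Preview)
Your proposal is correct and follows essentially the same route as the paper: assemble the error bound (Theorem~\ref{th:error_bound}), the Lipschitz bound on $\sigma$ (Lemma~\ref{lem:sigma_Lipschitz}), the solvability/step bound of Proposition~\ref{pro:solvable}, and the one-step $\sigma$-contraction of Lemma~\ref{lem:sigma_property}, and then chain them. The only cosmetic difference is that the paper obtains the one-step norm contraction $\Vert v+\delta(v)-v^{\ast}\Vert \le \varepsilon \Vert v-v^{\ast}\Vert$ directly (by feeding $\epsilon=\varepsilon/c_{1}^{2}$ into Lemma~\ref{lem:sigma_property}(a)) and uses it both for invariance of the ball and for the rate, whereas you first secure invariance via a uniform bound $\Vert v_{k+1}-v^{\ast}\Vert \le c_{3}\sigma(v_{0})$ and then invoke Lemma~\ref{lem:sigma_property}(a) a second time with arbitrary $\epsilon$ for the superlinear rate; both organizations are valid.
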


\begin{proof}
Theorem~\ref{th:error_bound}, Lemma~\ref{lem:sigma_Lipschitz}, and Proposition~\ref{pro:solvable} guarantee the existence of constants $r_{1} > 0$ and $c_{1} > 0$ such that for each $v \in B(v^{\ast}, r_{1})$, problem {\rm P}$(v)$ has a local optimum $\omega(v) = (\xi(v), \zeta(v), \Sigma(v)) \in B(\omega^{\ast}, \nu)$ and
\begin{align} \label{ineq:three_ineq}
\Vert v - v^{\ast} \Vert \leq c_{1} \sigma(v), ~ \sigma(v) \leq c_{1} \Vert v - v^{\ast} \Vert, ~ \Vert v + \delta(v) - v^{\ast} \Vert \leq c_{1} \sigma(v),
\end{align}
where $v = (x, y, Z)$ and $\delta(v) = (\xi(v), \zeta(v)-y, \Sigma(v)-Z)$. Now we take $\varepsilon \in (0, \frac{1}{2})$ arbitrarily. The continuity of $\sigma \colon \V \to \R$ and item~(a) of Lemma~\ref{lem:sigma_property} ensure that there exists $r_{2} > 0$ satisfying
\begin{align} \label{ineq:sigma_eps}
\sigma(v) \leq \frac{r_{1}}{c_{1}}, \quad \sigma(v + \delta(v)) \leq \frac{\varepsilon}{c_{1}^{2}} \sigma(v)
\end{align}
for $v \in B(v^{\ast}, r_{2})$. Let $r_{3} \coloneqq \min \{ r_{1}, r_{2} \} > 0$. In what follows, we show that
\begin{align} \label{imply:well-defined}
v \in B(v^{\ast}, r_{3}) \quad \Longrightarrow \quad v + \delta(v) \in B(v^{\ast}, r_{3}).
\end{align}
By exploiting the third inequality in \eqref{ineq:three_ineq} and the first inequality in \eqref{ineq:sigma_eps}, we have $\Vert v + \delta(v) - v^{\ast} \Vert \leq c_{1} \sigma(v) \leq r_{1}$, namely, $v + \delta(v) \in B(v^{\ast}, r_{1})$ for $v \in B(v^{\ast}, r_{3})$. It then follows from the first and second inequalities in \eqref{ineq:three_ineq} and the second inequality in \eqref{ineq:sigma_eps} that
\begin{align} 
\Vert v + \delta(v) - v^{\ast} \Vert \leq c_{1} \sigma(v + \delta(v)) \leq \frac{\varepsilon}{c_{1}} \sigma(v) \leq \varepsilon \Vert v - v^{\ast} \Vert \label{ineq:superlinear_converge} 
\end{align}
for any $v \in B(v^{\ast}, r_{3})$. Using $\varepsilon \in (0, \frac{1}{2})$ and $v \in B(v^{\ast}, r_{3})$ derives $\Vert v + \delta(v) - v^{\ast} \Vert \leq \frac{1}{2} r_{3}$. This fact indicates that \eqref{imply:well-defined} holds and problem {\rm P}$(v + \delta(v))$ also has a local optimum in $B(\omega^{\ast}, \nu)$. In other words, if the initial point of Algorithm~\ref{Local_SQSDP} satisfies $v_{0} \in B(v^{\ast}, r_{3})$, then Algorithm~\ref{Local_SQSDP} is well defined. Let $\{ v_{k} \} \subset B(v^{\ast}, r_{3})$ be an infinite sequence generated by Algorithm~\ref{Local_SQSDP}. It then follows from \eqref{ineq:superlinear_converge} and $\varepsilon \in (0, \frac{1}{2})$ that $\Vert v_{k} - v^{\ast} \Vert \leq \varepsilon \Vert v_{k-1} - v^{\ast} \Vert \leq \cdots \leq \varepsilon^{k} \Vert v_{0} - v^{\ast} \Vert \leq (\frac{1}{2})^{k} r_{3}$ for any $k \in \mathbb{N}$, that is, $v_{k} \to v^{\ast}$ as $k \to \infty$. Moreover, inequality~\eqref{ineq:superlinear_converge} shows $\Vert v_{k+1} - v^{\ast} \Vert \leq \varepsilon \Vert v_{k} - v^{\ast} \Vert$ for all $k \in \mathbb{N} \cup \{ 0 \}$, namely, 
\begin{align*}
\lim_{k \to \infty} \frac{\Vert v_{k+1} - v^{\ast} \Vert}{\Vert v_{k} - v^{\ast} \Vert} = 0.
\end{align*}
This implies that $\{ v_{k} \}$ converges superlinearly to $v^{\ast}$.
\par
In what follows, we show the latter part of this theorem. From item~(b) of Lemma~\ref{lem:sigma_property}, there exist $r_{4} > 0$ and $c_{2} > 0$ satisfying
\begin{align} \label{ineq:sigma_cons}
\sigma(v + \delta(v)) \leq c_{2} \sigma(v)^{2}
\end{align}
for each $v \in B(v^{\ast}, r_{4})$. Let us define $r \coloneqq \min \{ r_{1}, r_{4} \} > 0$ and $c \coloneqq c_{1}^{3} c_{2} > 0$. Since $v_{k} \to v^{\ast}$ as $k \to \infty$, there exists $\ell \in \mathbb{N}$ such that $v_{k} \in B(v^{\ast}, r)$ for all $k \geq \ell$. Hence, combining~\eqref{ineq:three_ineq} and~\eqref{ineq:sigma_cons} yields
\begin{align*}
\Vert v_{k+1} - v^{\ast} \Vert \leq c_{1} \sigma_{k+1} \leq c_{1} c_{2} \sigma_{k}^{2} \leq c \Vert v_{k} - v^{\ast} \Vert^{2} \quad \forall k \geq \ell,
\end{align*}
where note that $\sigma_{k} = \sigma(v_{k})$. Therefore, $\{ v_{k} \}$ converges quadratically to $v^{\ast}$.
\qed
\end{proof}

\section{Concluding remarks} \label{section:conclusion}
In this paper, we have proposed a locally convergent stabilized SQSDP method, which is given as Algorithm~\ref{Local_SQSDP}, for problem~\eqref{NSDP}. Algorithm~\ref{Local_SQSDP} is based on the stabilized SQP methods for NLP problems and iteratively solves a sequence of the stabilized QSDP subproblems to generate a search direction and new Lagrange multipliers. We have provided the local convergence analysis of Algorithm~\ref{Local_SQSDP}. As a result, we have proven that Algorithm~\ref{Local_SQSDP} converges superlinearly and quadratically to the KKT point under the SRCQ and SOSC.
\par
Future work will be to prove fast local convergence of the stabilized SQSDP method under degenerate cases.



\end{document}